\newtheorem{theorem}{Theorem}[section]
\newtheorem{proposition}[theorem]{Proposition}
\newtheorem{corollary}[theorem]{Corollary}
\newtheorem{lemma}[theorem]{Lemma}
\theoremstyle{definition}
\newtheorem{definition}[theorem]{Definition}
\newtheorem{remark}[theorem]{Remark}
\newtheorem{conjecture}[theorem]{Conjecture}
\newtheorem*{xtheorem}{Theorem}
\newtheorem*{xdefinition}{Definition}
\newcommand{\A}{{\mathcal A}}
\newcommand{\B}{{\mathcal B}}
\newcommand{\C}{{\mathcal C}}
\newcommand{\D}{{\mathcal D}}
\newcommand{\E}{{\mathcal E}}
\newcommand{\LL}{{\mathcal L}}
\newcommand{\M}{{\mathcal M}}
\newcommand{\R}{{\mathcal R}}
\newcommand{\T}{{\mathcal T}}
\newcommand{\U}{{\mathcal U}}
\newcommand{\NN}{\mathbb{N}}
\newcommand{\RR}{\mathbb{R}}
\newcommand{\ZZ}{\mathbb{Z}}
\newcommand{\BMO}{\text{BMO}}
\newcommand{\BMON}{\text{BMO}_N}
\newcommand{\car}[1]{{\llbracket #1 \rrbracket}} 
\newcommand{\abs}[1]{{\left|#1\right|}} 
\newcommand{\norm}[1]{{\left\|#1\right\|}}
\newcommand{\normBMO}[2]{{\left\|#1\right\|^{#2}_{\text{BMO}}}}
\newcommand{\normBMON}[2]{{\left\|#1\right\|^{#2}_{\text{BMO}_N}}}
\DeclareMathOperator{\Log2}{log_2}
\DeclareMathOperator{\Level}{Level}
\DeclareMathOperator{\Pos}{Pos}
\DeclareMathOperator{\lin}{span}
\numberwithin{equation}{section}
\begin{document}
\title[Postorder rearrangement operators]{Postorder rearrangement operators}
\author{Johanna Penteker}
\subjclass[2010]{Primary: 47B38 42B30 30H35 46B70 60G42; \mbox{Secondary:} 68P05 68P10}

\address{J. Penteker, Institute of Analysis, Johannes Kepler University Linz, Austria, 4040 Linz, Altenberger Strasse 69}
\email{johanna.penteker@jku.at}
\date{\today}

\maketitle
\setcounter{tocdepth}{1}

\subsection*{Abstract}
We investigate the rearrangement of the Haar system induced by the postorder on the set of dyadic intervals in $[0,1]$ with length greater than or equal to $2^{-N}$.
By means of operator norms on $\BMON$ we prove that the postorder has maximal distance to the usual lexicographic order.

\section{Introduction}
Let $\D_N$ be the set of dyadic intervals in $[0,1]$ with length greater than or equal to $2^{-N}$. Let $\tau$ be any bijective map on $\D_N$ and $(h_I)_{I \in \D_N}$ the $L^{\infty}$-normalised Haar system.   
On the space $\BMON$ we consider rearrangements of the Haar system induced by the map $\tau$:
\[T_{\tau}:h_I \rightarrow h_{\tau(I)}.\]

In recent years boundedness criteria and extrapolation properties for rearrangement operators that rearrange the Haar system have been studied in detail. See, \cite{MR510261,MR648492,MR1063121,MR1466662,MR2183484,MR2559130,MR2927805,MR3054318}.

In the present work we complement the cited papers by investigating in detail one particular rearrangement and its extremal nature. 
We introduce the \textit{postorder}, $\preceq$, on the set of dyadic intervals $\D_N$.
\begin{xdefinition}
 Let $I,J \in \D_N$. We say $I \preceq J$ if either $I$ and $J$ are disjoint and $I$ is to the left of $J$, or $I$ is contained in $J$. 
\end{xdefinition}  
\noindent
This specific order defines a bijective map $\tau_N$ on the set $\D_N$, called the \textit{postorder rearrangement}, that maps the $n^{th}$ interval in postorder onto the $n^{th}$ interval in lexicographic order. Its inverse is denoted by $\sigma_N$.

We show that the postorder has maximal distance to the usual lexicographic order on $\D_N$. We quantify the distance by the product of operator norms
 \[\norm{T_{\tau_N}:\, \BMON \rightarrow \BMON}\norm{T_{\sigma_N}:\, \BMON \rightarrow \BMON}.\]
Particularly, we prove that within a factor of $\sqrt{2}$, on $\BMON$, both the operator $T_{\tau_N}$ and its inverse $T_{\sigma_N}$ reach maximal norm. 
We denote
 \[R^N(\BMON)=\sup\Big\{\norm{T_{\tau}\colon \BMON \rightarrow \BMON}:\,\tau\colon \D_N \rightarrow \D_N\,\, \text{bijective}\Big\}.\]
Our main result is 
 \begin{xtheorem}
  For $T=T_{\tau_N}$ and $T=T_{\sigma_N}$ we have 
 \begin{equation*}
  \frac{1}{\sqrt{2}}R^N(\BMON)\leq \norm{T}_{\BMON}\leq R^N(\BMON).  
 \end{equation*}
  \end{xtheorem}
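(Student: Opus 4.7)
The upper bound $\|T\|_{\BMON}\leq R^N(\BMON)$ is immediate from the definition of $R^N$ as a supremum over all bijections of $\D_N$, since both $\tau_N$ and $\sigma_N$ are such bijections. All of the content is thus in the lower bound, which I would obtain by recasting the operator norm of an arbitrary rearrangement as a combinatorial extremal quantity over pairs of dyadic intervals, and then showing that the postorder realizes this extremum up to a factor of $\sqrt 2$.

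For $f=\sum a_I h_I$ the BMO$_N$-norm is $\normBMON{f}{2}=\sup_{L\in\D_N}|L|^{-1}\sum_{I\subseteq L}a_I^2|I|$, and its image under $T_\tau$ satisfies $\normBMON{T_\tau f}{2}=\sup_{J\in\D_N}|J|^{-1}\sum_{\tau(I)\subseteq J}a_I^2|\tau(I)|$. Introducing the variables $c_I=a_I^2|I|\geq0$ subject to the tree constraints $\sum_{I\subseteq L}c_I\leq|L|$ for every $L\in\D_N$ turns the computation of $\normBMON{T_\tau}{2}$ into a linear program, whose optimum is attained when $c_I$ is supported on an antichain $A\subseteq\D_N$ with $c_I=|I|$ on $A$. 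The operator norm squared therefore reduces to
\[
\normBMON{T_\tau}{2}\;=\;\sup_{J,\,A}\;\frac{1}{|J|}\sum_{I\in A,\;\tau(I)\subseteq J}|\tau(I)|,
\]
and taking the further supremum over bijections $\tau$ gives a corresponding expression for $R^N(\BMON)^2$. A direct counting argument, with $J=[0,1]$ and $A$ chosen to make $\tau(A)$ as large as possible within $\D_N$, then shows that $R^N(\BMON)^2$ is of order $N+1$.

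The remaining task is to exhibit antichains witnessing the lower bound for the postorder. The structural property of $\tau_N$ I would exploit is that, for every $J\in\D_N$, the preimage $\tau_N^{-1}(\{K\in\D_N:K\subseteq J\})$ splits into at most two blocks of consecutive postorder ranks, each contained in a single dyadic half of $[0,1]$; the analogous statement holds for $\sigma_N$ by symmetry. Choosing $A$ to be the antichain lying inside whichever half contains the larger of the two blocks then captures at least half of the weighted sum $\sum|\tau_N(I)|$ available to any bijection, producing $\normBMON{T_{\tau_N}}{2}\geq\tfrac12 R^N(\BMON)^2$ and, by a symmetric construction, the same bound for $T_{\sigma_N}$. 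The main obstacle is establishing the ``at most two consecutive blocks'' structural lemma for the postorder, which is the combinatorial heart of the argument; once it is in place, the factor $\sqrt 2$ falls out precisely from the worst-case $2\!:\!1$ split of the weighted mass between the two dyadic halves.
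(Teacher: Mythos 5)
Your high-level strategy -- upper bound trivial from the definition of $R^N(\BMON)$, lower bound by exhibiting collections $\C$ with $\car{\tau(\C)}/\car{\C}$ large and comparing against $R^N(\BMON)\leq (N+1)^{1/2}$ -- is indeed the paper's strategy, but two of your intermediate claims do not hold as stated. First, the linear program $\max\sum_I c_Iw_I$ over $\{c\geq 0:\ \sum_{I\subseteq L}c_I\leq \abs{L}\ \forall L\}$ is \emph{not} in general optimised by $c_I=\abs{I}$ on an antichain: already for $N=1$ with weights $1,\tfrac32,0$ on $[0,1),[0,\tfrac12),[\tfrac12,1)$ the feasible point $c=(\tfrac12,\tfrac12,0)$ gives $\tfrac54$, beating every antichain value. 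So your identity $\normBMON{T_\tau}{2}=\sup_{J,A}\abs{J}^{-1}\sum_{I\in A,\,\tau(I)\subseteq J}\abs{\tau(I)}$ is only valid as ``$\geq$''. That direction is all you need for the lower bound, and the upper bound $R^N(\BMON)^2\leq N+1$ follows directly from $\normBMO{T_\tau x}{2}\leq\sup_I\abs{x_I}^2\car{\D_N}$, so this part is repairable; but as written the reduction is unjustified.

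The genuine gap is the ``at most two consecutive blocks'' lemma, which is false. Since a complete subtree $\T^N_{\ell,k}=\{K:K\subseteq I_{\ell,k}\}$ occupies one run of consecutive \emph{postorder} ranks but $N-\ell+1$ separate runs of consecutive \emph{lexicographic} ranks (one per level), the set $\{I:\tau_N(I)\subseteq J\}=\tau_N^{-1}(\T^N_J)$ decomposes into $N-\ell+1$ postorder order intervals, not two, for every internal $J\neq[0,1)$. More importantly, even a corrected block decomposition would not by itself yield the quantitative fact from which the constant $\sqrt2$ actually arises. The paper's lower bound for $T_{\tau_N}$ rests on the explicit ordinal-number formulae (Lemmas 4.1--4.3): the bottom level $\E^N_{0,0}=\{I_{N,r}\}$ is an antichain with $\car{\E^N_{0,0}}=1$, and $\Level(a^N(r))=\lceil\Log2(r+1)\rceil$ shows that exactly $2^{m-1}$ of its $2^N$ members are sent by $\tau_N$ into level $m$ of $\D_N$, whence $\car{\tau_N(\E^N_{0,0})}\geq 1+\tfrac N2\geq\tfrac{N+1}2$. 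The $\sqrt2$ therefore comes from ``half of every level is hit,'' not from a $2{:}1$ split between the two dyadic halves of $[0,1)$. Finally, $T_{\sigma_N}$ is not obtained ``by symmetry'': the paper's witness there is of a completely different type, namely the antichain $\tau_N(\T^N_{1,1})$ (a family of pairwise disjoint intervals, by Theorem 4.7), whose image under $\sigma_N$ is the full subtree $\T^N_{1,1}$ with $\car{\T^N_{1,1}}=N$. Your proposal contains neither of these constructions, and they are the combinatorial heart of the proof.
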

This continuous the previous study of \cite{MR1389531}, who determine from a different perspective the extremal nature of the postorder and the induced rearrangement. P.F.X.~M\"uller and G.~Schechtman show that any block basis of the Haar system $(h_I)_{I \in \D_N}$ with respect to the postorder, $\preceq$, spans spaces that are well isomorphic to $\ell^p_k$, $1<p\neq2<\infty.$ On the other hand it is easy to find block bases of the Haar system with respect to the lexicographic order (the Rademacher functions) whose span is well isomorphic to $\ell^2_k$.

\medskip 
The postorder has its origin in computer sciences (see e.g.~\cite{berman,MR2245382}). In computer sciences, especially in the design and analysis of algorithms, dyadic trees are commonly used data structures, which enable efficient access to data. Tree traversal algorithms, which 
systematically walk through a tree and visit each node exactly once, enhance this efficient access. These algorithms define a specific order on the nodes of a tree. This makes it possible to talk about the node following or preceding a given one. The postorder tree traversal visits the left child, then the right child and then the node itself. Considering the dyadic tree structure of $\D_N$ this traversal induces exactly the postorder, $\preceq$, on $\D_N$.
The postorder tree traversal is for example used in the mergesort algorithm, invented by von Neumann in 1945.   
A more basic application is deallocating memory of all nodes of a tree, i.e.~deleting a tree. In calculator programs the postorder tree traversal is used to evaluate postfix notation.  
 
The Mallat algorithm for discrete wavelet transform (DWT) (see \cite{Mallat1, MR1219953}) determines the wavelet coefficients of a given discrete signal in a specific order which works its way up from the finest level to the coarsest. In case of the Haar transform this order is exactly our postorder, $\preceq$, cf.~figure \ref{fig:wave}. We discuss the discrete Haar wavelet transform (see e.g.~\cite{MR2400818}) now in detail. 
	\begin{figure}[h]
	 	\centering
	 		\includegraphics[width=0.9\textwidth,viewport=130 480 490 650]{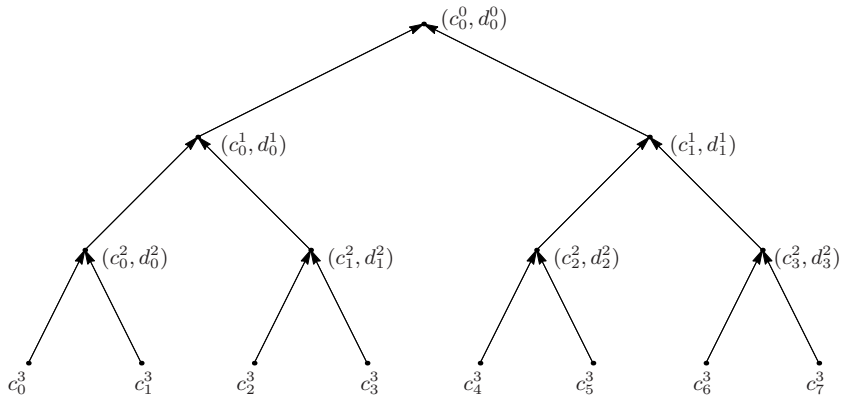}
	 	\caption{Calculation of the trend $(c^j_{k})$ and the fluctuation $(d^j_{k})$ for $0 \leq j \leq 3$.}
	 	\label{fig:wave}	
	\end{figure}
Let $N \in \NN_0$. Suppose a discrete signal on $[0,1]$ is given by the sequence $c^N=(c^N_1,\dots c^N_{2^{N-1}})$.
We process the signal by decomposing it into its trend (approximating coefficients) $c^{N-1}$ and its fluctuation (detail coefficients) $d^{N-1}$:
	\begin{align*}
	c^{N-1}_k=\frac{1}{\sqrt{2}}\left(c^N_{2k}+c^N_{2k+1}\right)\quad \text{and}\quad
	d^{N-1}_k=\frac{1}{\sqrt{2}}\left(c^N_{2k}-c^N_{2k+1}\right).
	\end{align*}
The trend and the fluctuation are two subsignals of $c^N$ with half of its length. 
The signal $c^{N-1}$ is again decomposed into its trend $c^{N-2}$ and its fluctuation $d^{N-2}$, which are again subsignals of $c^{N-1}$ with half of its length. 
Successively we compute from $c^{j}$ the trends $c^{j-1}$ and the fluctuations $d^{j-1}$. 
Finally, after $N$ steps, we have decomposed the signal $c^N$ into the coarsest information $c^0_0$ and the detail coefficients $(d^j)_{j=0}^{N-1}$, where $d^j=(d^{j}_{0}, \dots, d^{j}_{2^j-1})$.

The Haar system is the most basic orthonormal wavelet basis used in DWT and gives insight in more sophisticated wavelet transforms. 

\section{Preliminaries}
Throughout this paper we will denote by $\NN$ the set of positive integers and by $\NN_0=\NN\,\cup\,\{0\}$ the set of non-negative integers.
\[\text{Unless stated otherwise: } \ell,\, k, \, N \in \NN_0 \, \text{ such that } 0\leq \ell\leq N \text{ and } 0\leq k \leq 2^{\ell-1}.\]

\subsection{Floor and ceiling function}
The \textit{floor function} $\lfloor \cdot \rfloor\colon \RR \rightarrow \ZZ$ and the \textit{ceiling function} $\lceil \cdot \rceil\colon \RR \rightarrow \ZZ$ are defined as follows: 
\begin{align*}
\lfloor x \rfloor &= \max{\{z \in \ZZ: z\leq x\}},\quad 
\lceil x \rceil=\min{\{z \in \ZZ: z\geq x\}}.
\end{align*}

\subsection{Dyadic intervals and trees}
\subsubsection*{Dyadic intervals} 
 An interval $I \subseteq [0,1]$ is called a \textit{dyadic interval}, if there exist non-negative integers $\ell$ and $k$ with $0 \leq k \leq 2^{\ell}-1$ such that
	\[I=I_{{\ell},k}=\left[\frac{k}{2^{\ell}},\frac{k+1}{2^{\ell}}\right[.
\] 
The length of a dyadic interval $I_{\ell,k}$ is given by $\abs{I_{\ell,k}}=2^{-\ell}$.
In the following we consider for fixed $N \in \NN_0$ the set of dyadic intervals with length greater than or equal to $2^{-N}$ given by 
\begin{equation}
\label{eq:dn}
\D_N=\{I_{{\ell},k}: \, 0 \leq {\ell} \leq N,\,0 \leq  k \leq 2^{\ell}-1\}.
\end{equation}

\subsubsection*{Carleson constant} 
Let $\mathcal{C} \subseteq \D_N$. We define the Carleson constant of $\C$ as follows 
\begin{equation}
\car{\mathcal{C}}=\sup_{I\in \mathcal{C}}\frac{1}{\abs{I}}{\sum_{J \subseteq I, J \in\mathcal{C}}{\abs{J}}}.
\label{eq:carleson}
\end{equation} If $\mathcal{C}$ is non-empty, then $\car{\mathcal{C}}\geq 1$, otherwise $\car{ \mathcal{C} } = 0$.  

\subsubsection*{Dyadic trees}
\label{sec:binarytree}
 See \cite{berman, MR2245382}. 
A \textit{dyadic tree} $\mathcal{T}$ consists of a set of nodes that is either empty or has the following properties:
\begin{enumerate}
	\item One of the nodes, say $R$, is designated the root node.
	\item The remaining nodes (if any) are partitioned into two disjoint subsets, called the left subtree and the right subtree, respectively, each of which is a dyadic tree. 
\end{enumerate}
The definition yields that every node of a tree is the root of some
subtree contained in the tree $\T$. 
The root of the left resp.~the right subtree described in property (2) is called the \textit{left child} resp.~\textit{the right child} of the root $R$. Conversely, the root $R$ is called the \textit{parent} of the left (resp.~right) child. We use the terminology of family trees: parent, children, descendant, etc. 
The nodes of a dyadic tree ${\mathcal{T}}$ can be partitioned into disjoint sets, called \textit{levels}, depending on the length $\ell$ of the unique path from a node to the root $R$. The root $R$ is at level $0$. 
The \textit{lowermost level} of ${\mathcal{T}}$ is the set of nodes, whose unique path from the node to the root $\R$ has maximal length within the tree $\mathcal{T}$. 
The \textit{depth} of ${\mathcal{T}}$ is the number of levels in $\mathcal{T}$ that do not contain the root $R$. 
A dyadic tree $\mathcal{T}$ is \textit{complete}, if every node in $\mathcal{T}$ has exactly two children, except the nodes in the lowermost level, which have exactly zero children, cf.~figure \ref{fig:fig11}.
In the following we consider complete dyadic trees of depth $N$, $N\in \NN_0$. The number of nodes in each level $\ell$, $0 \leq \ell \leq N$, is given by $2^{\ell}$ and the total number of nodes in a complete dyadic tree of depth $N$ is given by $2^{N+1}-1$.

\subsubsection*{The complete dyadic tree $\D_N$}
 The set
 $\D_N$, given by equation (\ref{eq:dn}), has a natural dyadic tree structure, cf.~figure \ref{fig:fig11}.
\begin{figure}[h!]
	\centering
		\includegraphics[width=0.85\textwidth,viewport=125 490 480 680]{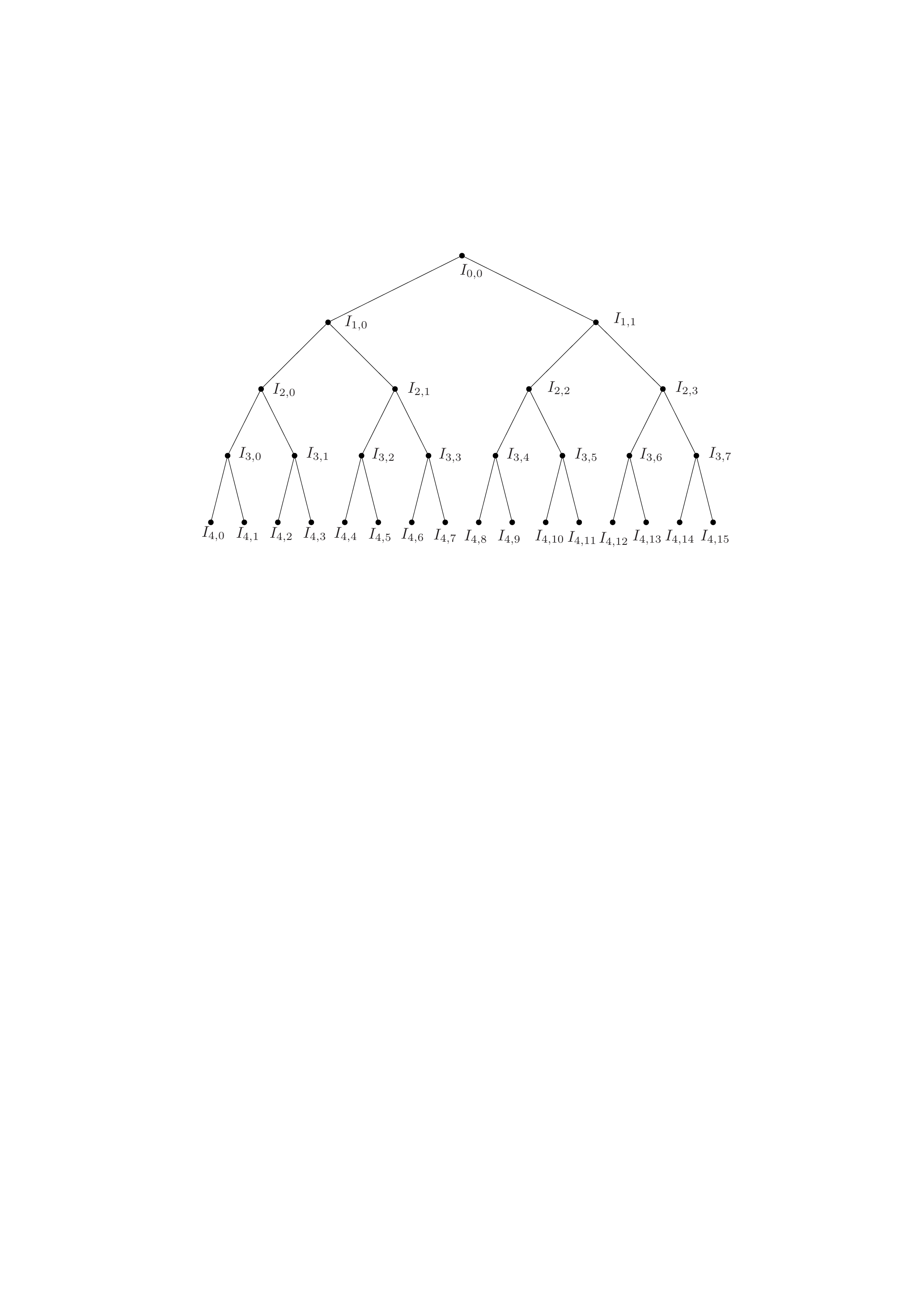}
	\caption{The dyadic tree structure of $\D_4$.}
	\label{fig:fig11}	
	\end{figure}
The root of the complete dyadic tree $\D_N$ is the dyadic interval $I_{0,0}$. The depth of $\D_N$ is equal to $N$. For an interval $I_{{\ell},k} \in \D_N$ the index ${\ell}$ denotes its level within the tree and $k$ its position within the level. The left resp.~the right child of an interval $I_{\ell,k} \in \D_N$ is given by
\begin{equation}
I_{\ell+1,2k}=\left[\frac{2k}{2^{\ell+1}},\frac{2k+1}{2^{\ell+1}}\right[\quad \text{resp.} \quad I_{\ell+1,2k+1}=\left[\frac{2k+1}{2^{\ell+1}},\frac{2\,(k+1)}{2^{\ell+1}}\right[.
\end{equation}

\subsubsection*{Dyadic subtrees}
\label{sec:defsubtrees}
Let $I_{\ell,k} \in \D_N$. We denote by $\mathcal{T}^N_{\ell,k}$ the complete dyadic subtree of $\D_N$ with root $I_{\ell,k}$ and depth $N-\ell$.
Note that $\mathcal{T}^N_{\ell,k}=\{I \in \D_N:\, I \subseteq I_{\ell,k}\}$.
Therefore, we get from (\ref{eq:carleson}) the Carleson constant
\begin{equation}
\label{eq:subtree}
\begin{split}
\car{ \mathcal{T}^N_{\ell,k}}&=\frac{1}{\abs{I_{\ell,k}}}\sum_{I \in \mathcal{T}^N_{\ell,k}}\abs{I}
=N-\ell+1. 
\end{split}
\end{equation}

\subsection{The order on $\D_N$}
\label{sec:postorder}
See \cite{MR1389531}, \cite{berman} and \cite{MR2245382}. 
The \textit{postorder} $\preceq$ on $\D_N$ is defined as follows. 
\begin{definition}
 Let $I,J \in \D_N$. We say $I \preceq J$ if either $I$ and $J$ are disjoint and $I$ is to the left of $J$, or $I$ is contained in $J$.
\end{definition}
In terms of the dyadic tree structure of $\D_N$ the postorder is defined as follows:
children are always smaller than their parent, the left child is always smaller than the right child and smaller than the descendants of the right child, cf. figure \ref{fig:fig1}.

The natural order on the set $\D_N$ is the \textit{lexicographic order}, $\leq_{l}$, on the set $\{({\ell},k)\}$. 
The postorder on $\D_N$, in contrast to the lexicographic order depends on the depth $N$.
The postorder works its way up from the leftmost node in the lowermost level to the root of the dyadic tree $\D_N$. Therefore, it is clear from the definition that the root of the dyadic tree $\D_N$ has postorder ordinal number $2^{N+1}-1$, which is the total number of nodes contained in the tree $\D_N$. 

Observe that $I_{1,0}$ is the left child and $I_{1,1}$ is the right child of the root $I_{0,0}$. 
Hence, the complete dyadic subtree  $\mathcal{T}^N_{1,0}$ resp.~$\mathcal{T}^N_{1,1}$ of $\D_N$ is the left resp.~right subtree of the root $I_{0,0}$. 
The definition of the postorder yields that the left subtree contains the ordinal numbers $1,\dots, 2^{N}-1$ and the right subtree the ordinal numbers $2^{N},\dots,2^{N+1}-2$. 

\subsection{The order intervals}
Let $J_1,J_2 \in \D_N$. An order interval with respect to the postorder, $\preceq$, is given by 
\begin{equation}
\label{eq:postint}
\B^N(J_1,J_2)=\{I \in \D_N: J_1 \preceq I \preceq J_2\},
\end{equation}
and with respect to the lexicographic order, $\leq_l$, by
\begin{equation}
\label{eq:lexint}
\E(J_1,J_2)=\{I \in \D_N: J_1 \leq_l I \leq_l J_2\}.
\end{equation}

The following definition and proposition is taken from \cite{MR1389531} and describes order intervals with respect to the postorder, $\preceq$.  
\begin{definition}
\label{def:cone}
Let $I,J \in \D_N$ with $I \subseteq J$. 
\begin{enumerate}
\item The \textit{cone} $\C=\C(I,J)$ of dyadic intervals between $I$ and $J$ is the unique collection of dyadic intervals $\C=\{C_1,\dots,C_n\}$, where $n=\Log2{\frac{\abs{J}}{\abs{I}}}+1$, satisfying
$C_1=I$, $C_n=J$, $\abs{C_i}=\frac{1}{2}\abs{C_{i+1}}$ and $C_i \subset C_{i+1}$ for $1\leq i \leq n-1$.

\item The \textit{right fill-up} of the cone $\C$ is the collection of dyadic intervals $\R=\R(I,J)=\bigcup_{i=1}^{n-1}\U_{i+1}$,
where $ \U_{i+1}=\{U \in \D_N: U \subseteq C_{i+1}\setminus C_{i}\}$,
if $C_i$ is the left half of $C_{i+1}$ and 
 $\U_{i+1}=\emptyset$, if $C_i$ is the right half of $C_{i+1}$.
\end{enumerate}
\end{definition}

\begin{figure}[h!]
	\centering
		\includegraphics[width=0.8\textwidth,viewport=118 496 491 692]{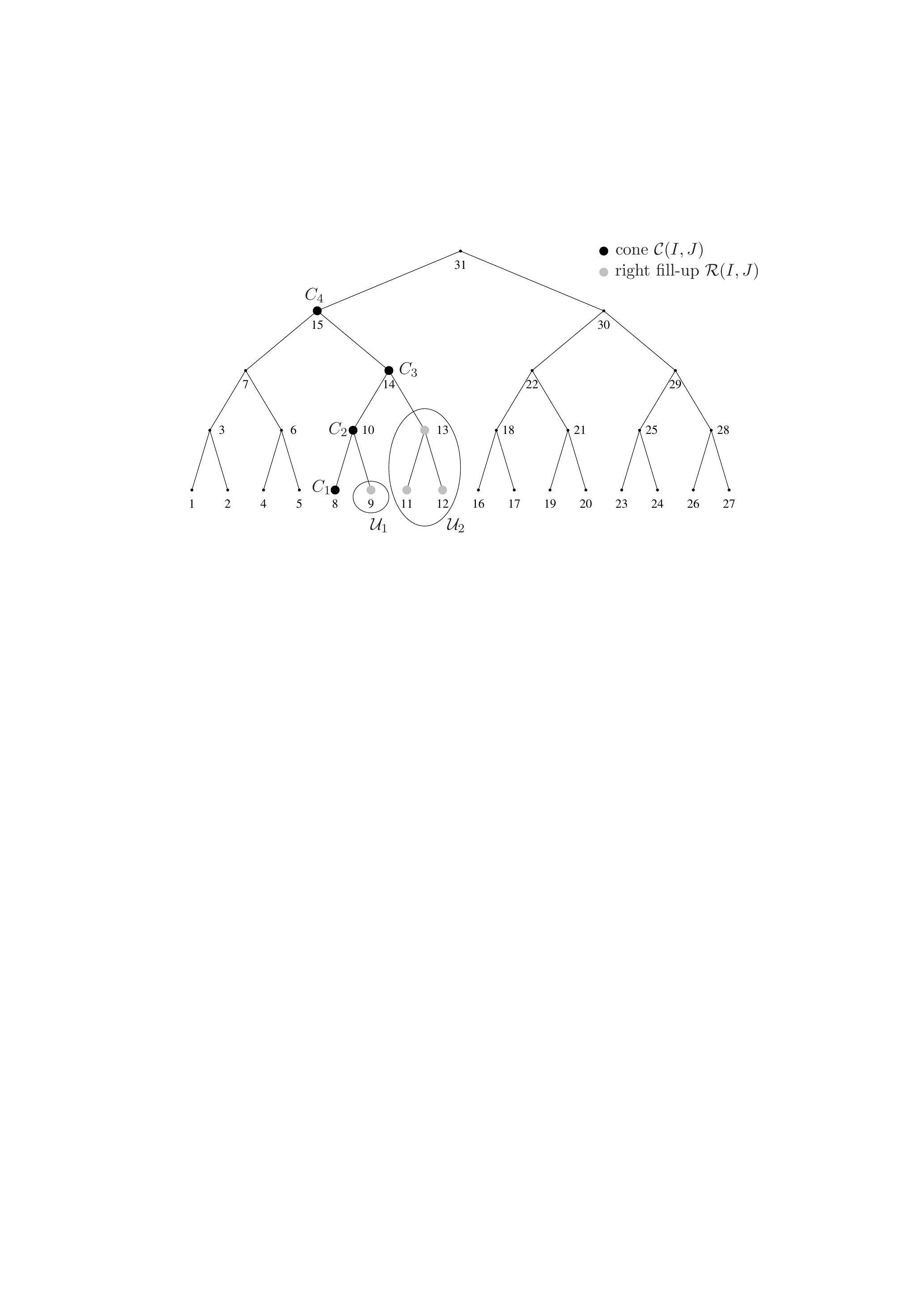}
	\caption{Cone and right fill-up given by the dyadic intervals $I=I_{4,4}$ and $J=I_{1,0}$ in $\D_4$.}
	\label{fig:figcr}
\end{figure}

\begin{proposition}
\label{prop:maxint}
Let $J_1,J_2 \in \D_N$ and $J_1 \preceq J_2$. 
For the postorder order interval $B^N(J_1,J_2)$
there exists a unique collection $\LL=\{L_1,\dots,L_m\}$ of pairwise disjoint dyadic intervals satisfying
\begin{enumerate}
\item $\abs{L_i}< \abs{L_{i-1}}$, if $2\leq i \leq m-1$;
\item $\abs{L_{m}}\leq \abs{L_{m-1}}$, if $m \geq 2$;
\item $L_{i+1}$ lies to the right of $L_i$ and the closures $\overline{L_i}$ and $\overline{L_{i+1}}$ intersect in exactly one point, the left endpoint of $\overline{L_{i+1}}$;
\item $J_1\subseteq L_1$, $J_2=L_m$ and 
\begin{equation}
\label{eq:propord}
\B^N(J_1,J_2)=\C(J_1,L_1)\cup\R(J_1,L_1)\cup_{i=2}^{m}\M_i,
\end{equation}
where $\M_i=\{I \in \D_N:I \subseteq L_i\}$. 
\end{enumerate}
\end{proposition}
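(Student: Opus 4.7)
The plan is to establish existence by an explicit construction and to read off uniqueness from it; I would split the argument according to whether $J_1\subseteq J_2$ or $J_1$ and $J_2$ are disjoint with $J_1$ to the left of $J_2$.

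When $J_1\subseteq J_2$, take $m=1$ and $L_1=J_2$; conditions (1)--(3) hold vacuously, and the equality $\B^N(J_1,J_2)=\C(J_1,J_2)\cup\R(J_1,J_2)$ reduces to a short case analysis. Any $I\in\B^N(J_1,J_2)$ must lie inside $J_2$, since an $I$ disjoint from and to the left of $J_2\supseteq J_1$ cannot satisfy $J_1\preceq I$; then $I$ either contains $J_1$ (landing in the cone) or is disjoint from $J_1$ and to its right, which by the description of right siblings along the ancestor chain of $J_1$ places it in the right fill-up.

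In the main case $J_1\cap J_2=\emptyset$, write $l_I$, $r_I$ for the left and right endpoints of an interval $I$. I would define $L_1$ to be the unique largest dyadic interval with $J_1\subseteq L_1$ and $r_{L_1}\leq l_{J_2}$; existence is certified by $J_1$ itself. The key observation is that $L_1$ must be a \emph{left} child, for otherwise its parent would share its right endpoint and be a strictly larger valid candidate; hence $r_{L_1}$ is an odd multiple of $|L_1|$, and the parent's right endpoint $r_{L_1}+|L_1|$ strictly exceeds $l_{J_2}$, giving the bound $|L_1|>l_{J_2}-r_{L_1}$. I would then take $L_2,\dots,L_{m-1}$ to be the unique decomposition of the gap $[r_{L_1},l_{J_2})$ into its maximal dyadic subintervals, listed from left to right, and set $L_m=J_2$. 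Adjacency (3) and the conditions $J_1\subseteq L_1$, $L_m=J_2$ are then immediate.

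The main obstacle is to verify (1) and (2). Writing the binary expansion $l_{J_2}-r_{L_1}=\sum_{s=1}^{t}2^{-j_s}$ with $j_1<\cdots<j_t$, the bound $|L_1|>l_{J_2}-r_{L_1}$ forces $j_1>-\log_2|L_1|$, and an induction along $s$ then shows $|L_{s+1}|=2^{-j_s}$ for $s=1,\dots,t$, giving strict decrease (1). For (2) I would show that $-\log_2|L_1|$ is at most the denominator exponent $v$ of $l_{J_2}$ (the unique integer with $l_{J_2}$ an odd multiple of $2^{-v}$): if not, $r_{L_1}$ would be an odd multiple of $|L_1|$ while $l_{J_2}$ is an even multiple of $|L_1|$, forcing $l_{J_2}-r_{L_1}\geq|L_1|$ and contradicting the bound above. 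Hence $j_t\leq v$, and $|J_2|\leq 2^{-v}\leq 2^{-j_t}=|L_{m-1}|$, as required. Finally, the decomposition (\ref{eq:propord}) follows by partitioning $\B^N(J_1,J_2)$ into three classes -- ancestors of $J_1$ with $r_I\leq l_{J_2}$, intervals contained in $J_2$, and intervals strictly between $J_1$ and $J_2$ -- and assigning each to $\C(J_1,L_1)$, $\M_m$, or $\R(J_1,L_1)\cup\bigcup_{i=2}^{m-1}\M_i$, using the non-crossing property of dyadic intervals to see that every dyadic interval in the middle strip lies in exactly one $L_i$. Uniqueness is then forced: $L_m=J_2$ by (4); $L_1$ because any smaller choice would leave some ancestor of $J_1$ uncovered by $\C(J_1,L_1)$ while any larger choice would intersect $J_2$; and the remaining $L_i$ are pinned down by (1), (3), and $L_m=J_2$.
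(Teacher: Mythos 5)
Your proof is correct, and the first thing to note is that the paper itself offers no proof of Proposition \ref{prop:maxint}: the statement is imported from \cite{MR1389531}, and the proof of Theorem \ref{th:carlesonorderint} later simply refers back to that source. So you have supplied an argument where the paper supplies none, and the construction you give is the natural one (surely the same in spirit as the cited reference): in the disjoint case, $L_1$ is the largest dyadic ancestor of $J_1$ with $r_{L_1}\leq l_{J_2}$, maximality forces $L_1$ to be a left child and hence $\abs{L_1}>l_{J_2}-r_{L_1}$, the gap $[r_{L_1},l_{J_2})$ is tiled by its maximal dyadic subintervals whose lengths are the terms of the binary expansion of $l_{J_2}-r_{L_1}$ and are strictly decreasing precisely because of that inequality, and the $2$-adic valuation comparison between $r_{L_1}$ and $l_{J_2}$ is exactly what yields $\abs{L_m}\leq\abs{L_{m-1}}$, including the degenerate case $r_{L_1}=l_{J_2}$ where $L_{m-1}=L_1$. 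Two minor points would deserve a sentence in a written-out version. First, the gap intervals all belong to $\D_N$ because $r_{L_1}$ and $l_{J_2}$ are multiples of $2^{-N}$, so every exponent $j_s$ is at most $N$. Second, in verifying (\ref{eq:propord}), the intervals of the middle strip that are contained in $L_1$ land in $\R(J_1,L_1)$ rather than in one of the $\M_i$; your phrase ``lies in exactly one $L_i$'' glosses over this, but the standard argument (take the minimal cone element $C_{i+1}$ containing such an $I$; then $I$ is disjoint from $C_i$ and to the right of $J_1\subseteq C_i$, so $C_i$ is the left half of $C_{i+1}$ and $I$ belongs to $\U_{i+1}$) closes it, and the same argument finishes your case $J_1\subseteq J_2$. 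For uniqueness, the cleanest route is through item (4) alone: a smaller candidate for $L_1$ fails to cover the true $L_1$, a larger one is not even in $\B^N(J_1,J_2)$, and once $L_1$ is fixed the remaining $L_i$ are the maximal elements of what is left, ordered by (3); your sketch says essentially this.
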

\begin{remark}
Note that the intervals $(L_i)_{i=1}^m$ are the maximal (with respect to inclusion) dyadic intervals in the postorder order interval $B^N(J_1,J_2)$.
\end{remark}
\begin{figure}[h!]
\centering
	\includegraphics[width=0.85\textwidth,viewport=116 467 523 688]{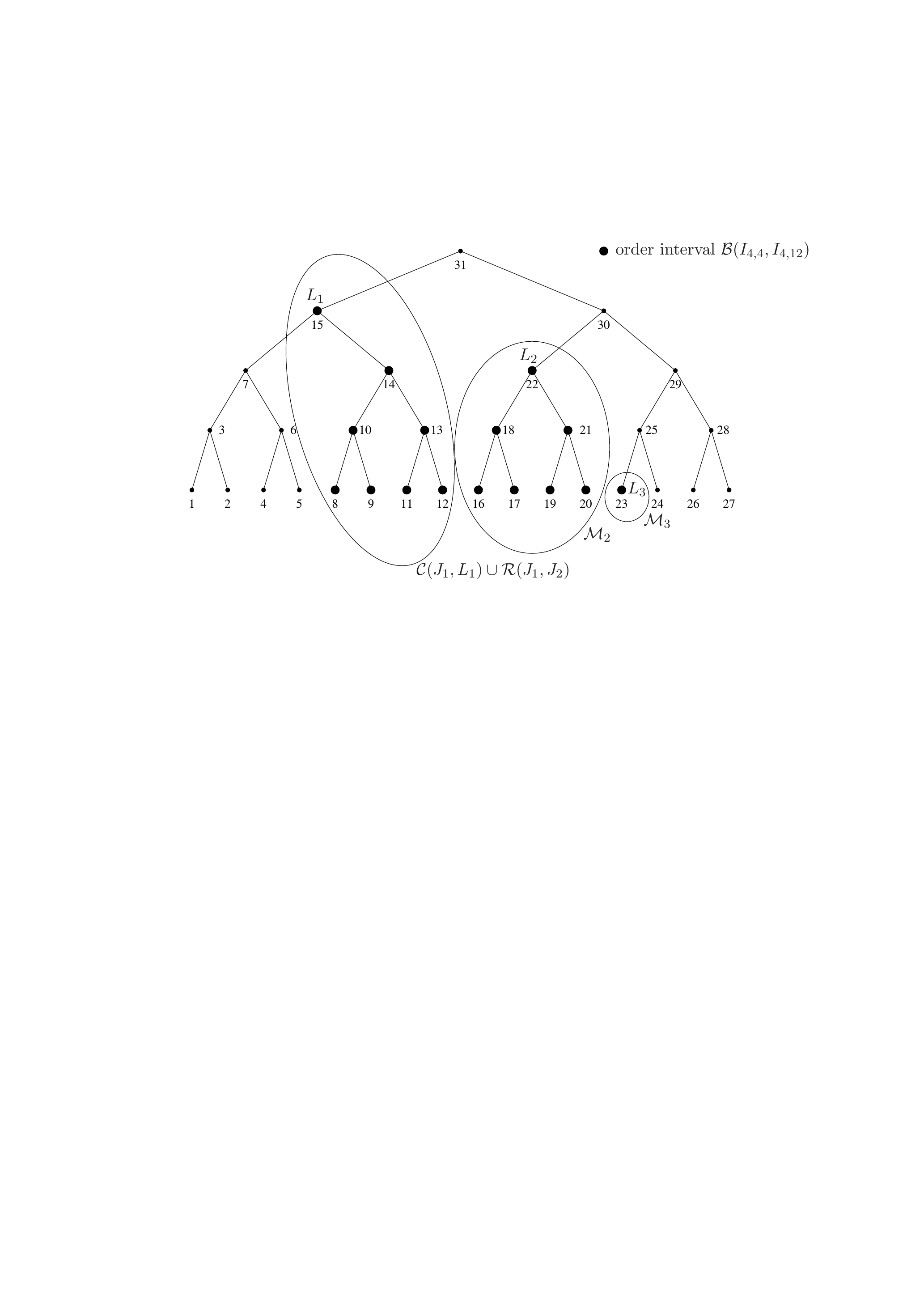}
		\caption{The postorder order interval $\B^4(I_{4,4},I_{4,12})$ in $\D_4$.}
		\label{fig:figorderint}
\end{figure}

\subsection{The spaces}
\subsubsection*{Haar system and Haar support} 
We define the $L^{\infty}$-normalised \textit{Haar system}
$(h_I)_{I\in \D_N}$ as follows:
	\[h_I=\begin{cases}
  1  &\text{ on the left half of $I$},\\
  -1  &\text{ on the right half of $I$},\\
  0  &\text{ otherwise.}
\end{cases}
\]

Let $(x_I)_{I \in \D_N}$ be a real sequence and let  
$f=\sum_{I \in \D_N}x_Ih_I$.
The \textit{Haar support} of $f$ is the set $\{I \in \D_N: x_I\neq 0\}$. The Haar support of $f$ is contained in a non-empty collection of dyadic intervals $\C \subseteq \D_N$ if and only if $f=\sum_{I \in \C}x_Ih_I$. 
We denote by 
$\M(\C)$ the space of all functions $f$ that have Haar support in a non-empty collection $\C \subseteq \D_N$.

\subsubsection*{Dyadic $\BMON$ and the dyadic Hardy spaces $H^p_N$.} 
We define here the known spaces $\BMON$ and $H^p_N$, for fixed $N \in \NN_0$, (see e.g.~\cite{MR955660}). Let $(x_I)_{I \in \D_N}$ be a real sequence and $f=\sum_{I \in \D_N}x_Ih_I$. We define
\begin{equation*}
\label{eq:bmonorm}
\normBMO{f}{} = \sup_{I\in \D_N}\bigg(\frac{1}{\abs{I}}\sum_{J\subseteq I}{\abs{x_J}^2\abs{J}} \bigg)^{\frac{1}{2}}
\end{equation*}
and 
\begin{equation*}
\label{eq:hpnorm}
	\norm{f}_{H^p}=\norm{S(f)}_{L^p([0,1])}, \quad \text{for }\,\,  0<p<\infty,
\end{equation*}
where $S(f)$ is the square function of $f$ defined by
\begin{equation*}
\label{eq:squaref}
	S(f)(t)=\bigg(\sum_{I \in \D_N}{\abs{x_I}^2 1_I(t)}\bigg)^{\frac{1}{2}}, \quad t \in [0,1].
\end{equation*}
Then we define the spaces $\BMON$ and $H^p_N$, $0<p<\infty$, as follows
\begin{equation*}
\BMON=\Big(\lin{\{h_I:\, I \in \D_N\}},\, \normBMO{\cdot}{}\Big)
\end{equation*}and
\begin{equation*}
H^p_N=\Big(\lin{\{h_I:\, I \in \D_N\}},\, \norm{\cdot}_{H^p}\Big).
\end{equation*}

Note that $\BMO_N$ and $H^p_N$ are finite dimensional subspaces of the dyadic $\BMO$ and $H^p$ spaces, defined in \cite{MR2157745}.

Paley's theorem (\cite{MR1576148}, see also \cite{MR2157745}) asserts that for all $1<p<\infty$ there exists a constant $A_p$ such that for all $f \in L^p([0,1])$ given by $f=\sum_{I\in \D}{x_Ih_I}$ 
\begin{equation*}
\label{eq:Paley}
\frac{1}{A_p}\norm{f}_{L^p}\leq\norm{S(f)}_{L^p}\leq A_p\norm{f}_{L^p}.
\end{equation*}
This theorem identifies $H^{q}$ as the dual space of $H^p$, where $\frac{1}{p}+\frac{1}{q}=1$ and $1<p<\infty$. 

Fefferman's inequality (\cite{MR0447953}, see also \cite{MR2157745})
\begin{equation}
\label{eq:feff}
\abs{\int {fh}\,}\leq 2\sqrt{2}\,\norm{f}_{H^1}\normBMO{h}{},
\end{equation} 
and a theorem to the effect that every continuous linear functional $L:H^1 \rightarrow \RR$ is necessarily of the form $L(f)=\int{f\varphi\,}$ with $\normBMO{\varphi}{}\leq \norm{L}$ identify BMO as dual space of $H^1$, (see \cite{MR0447953},\cite{MR0448538},\cite{MR2157745}). 

\subsection{The operators}
\label{sec:rearop}
\subsubsection*{Rearrangements of the Haar system}
Let $\tau$ be a bijective map defined on the set $\D_N$. 
On $\BMON$ we study rearrangements of the $L^{\infty}$-normalised Haar system $(h_I)_{I \in \D_N}$ given by the rearrangement operator
\begin{equation*}
T_{\tau}\colon h_I \mapsto h_{\tau(I)},
\label{eq:rbmo}
\end{equation*}
and on $H^p_N$, $0<p<\infty$, rearrangements of the $L^p$-normalised Haar system given by the rearrangement operator 
	\[T_{\tau,p}:\frac{h_I}{\abs{I}^{\frac{1}{p}}} \mapsto \frac{h_{\tau(I)}}{\abs{\tau(I)}^{\frac{1}{p}}}.
\]

A standard argument (given below) yields the following norm estimates for rearrangement operators on BMO
\begin{equation}
\label{eq:rear}
\sup_{\substack{\C \subseteq \D_N\\ \text{non-empty}}}{\frac{\car{\tau(\C)}^{\frac{1}{2}}}{\car{\C}^{\frac{1}{2}}}}\leq \normBMON{T_{\tau}}{}\leq \left(N+1\right)^{\frac{1}{2}}.
\end{equation}
Note that the lower bound in (\ref{eq:rear}) is always greater than or equal to one. 

Let $x=\sum_{I \in \D_N}x_Ih_I$. Then
\begin{align*}
\normBMO{T_{\tau}x}{2}&=\sup_{I \in \D_N}{\frac{1}{\abs{I}}\sum_{J\subseteq I}\abs{x_{\tau^{-1}(J)}}^2\abs{J}}\leq \sup_{I \in \D_N}{\abs{x_I}}^2\,\car{ \D_N}\leq \normBMO{x}{2}\,\car{ \D_N}. 
\end{align*}
Definition (\ref{eq:carleson}) yields $\car{ \D_N}=N+1$. 
Let $\mathcal{C}\subseteq \D_N$ be any non-empty collection of dyadic intervals.
Let $x=\sum_{I \in \mathcal{C}}{h_I}$. Then
\begin{equation*}
\normBMO{x}{}=\car{\mathcal{C}}^{\frac{1}{2}} \hspace{3pt} \text{ and } \hspace{3pt} \normBMO{T_{\tau}x}{}=\car{\tau(\mathcal{C})}^{\frac{1}{2}}.
\end{equation*}

Let $x=\sum_{I \in \C}{x_Ih_I}$ for some non-empty collection of dyadic intervals $\C \subseteq \D_N$. The above argument provides the following rough upper bound
\begin{equation}
\label{eq:rear1}
\normBMO{T_{\tau}x}{}\leq \normBMO{x}{}\,\car{ \tau(\C)}^{\frac{1}{2}}.
\end{equation}

The adjoint operator of a rearrangement operator is again a rearrangement operator induced by the inverse rearrangement.
By the duality of $H^1$ and BMO we have that the operator $T_{\tau}$ on $\BMON$ is the adjoint operator of $T_{\tau^{-1},1}$ on $H^1_N$ with
\begin{equation}
\label{eq:duality}
\frac{1}{C_F}\normBMON{T_{\tau}}{}\leq \norm{T_{\tau^{-1},1}}_{H^1_N}\leq C_F\normBMON{T_{\tau}}{},
\end{equation}where $C_F=2\sqrt{2}$ is the constant appearing in Fefferman's inequality (\ref{eq:feff}).

\subsubsection*{Interpolation and extrapolation of rearrangement operators}
See \cite{MR2183484, MR2157745}. 
The following interpolation resp.~extrapolation theorem provides a tool that enables one to deduce norm estimates for the rearrangement operators $T_{\tau,p}$ on $H^p_N$ for every $0<p<2$ from norm estimates of some rearrangement operator $T_{\tau,p_0}$ on $H^{p_0}_N$, $0<p_0<2$. The left-hand side inequality corresponds to an extrapolation based on Pisier's extrapolation norm (see \cite{MR2183484}). The right-hand side inequality is obtained by a standard interpolation argument. Note that $\norm{T_{\tau,2}}_{H^2_N}=1$.
\begin{theorem}
\label{th:interpolation0}
For all $0<s<r<2$ there exists a constant $c_{r,s}>0$ such that 
\begin{equation*}
\frac{1}{c_{r,s}}\norm{T_{\tau,s}}_{H^{s}_N}^{\frac{s}{2-s}}\leq \norm{T_{\tau,r}}_{H^{r}_N}^{\frac{r}{2-r}}\leq c_{r,s}\norm{T_{\tau,s}}_{H^{s}_N}^{\frac{s}{2-s}}.
\end{equation*}
\end{theorem}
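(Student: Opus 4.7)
The plan splits the two-sided inequality into its interpolation (right-hand) and extrapolation (left-hand) halves, which require very different tools. The common starting point is that $T_{\tau,2}$ is an isometry of $H^2_N$: the $L^2$-normalised Haar system is orthonormal and $\tau$ merely permutes it, so $\norm{T_{\tau,2}}_{H^2_N}=1$.

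For the right-hand inequality I would interpolate between $H^s_N$ and $H^2_N$. Choosing $\theta\in(0,1)$ with $\frac{1}{r}=\frac{1-\theta}{s}+\frac{\theta}{2}$ forces $1-\theta=\frac{s(2-r)}{r(2-s)}$. Complex interpolation---Calderón's method applied to the square-function description of $H^p_N$, or Riesz--Thorin on $L^p$ via Paley's theorem when $p\ge 1$---then yields
\[
\norm{T_{\tau,r}}_{H^r_N}\le c_1\,\norm{T_{\tau,s}}_{H^s_N}^{s(2-r)/(r(2-s))}.
\]
Raising both sides to the power $\frac{r}{2-r}$ converts the exponent on the right into $\frac{s}{2-s}$, giving the right-hand inequality with constant of order $c_1^{r/(2-r)}$.

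For the left-hand inequality I would invoke the Pisier-type extrapolation developed in \cite{MR2183484}. The mechanism is to factor the $H^s_N$ action of $T_\tau$ through its $H^r_N$ action together with multiplication by an $L^{q}_N$ weight, where $\frac{1}{s}=\frac{1}{r}+\frac{1}{q}$. Concretely, $\norm{T_{\tau,s}f}_{H^s_N}$ is expressed as an infimum over factorisations transported along $\tau$, exploiting that $T_{\tau,2}$ is an $H^2$-isometry; estimating the principal factor by $\norm{T_{\tau,r}}_{H^r_N}$ and the weight by a constant depending only on $r,s$ produces
\[
\norm{T_{\tau,s}}_{H^s_N}\le c_2\,\norm{T_{\tau,r}}_{H^r_N}^{r(2-s)/(s(2-r))},
\]
and raising to the power $\frac{s}{2-s}$ gives the left-hand inequality.

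The real difficulty is the extrapolation step, since going downward from $H^r$ to $H^s$ is not available from interpolation alone and genuinely requires the rearrangement structure together with $\norm{T_{\tau,2}}_{H^2_N}=1$. The bulk of the work goes into checking that the factorisation scheme of \cite{MR2183484} transfers cleanly to the finite-dimensional $\D_N$ setting with a constant $c_2$ independent of $N$; once that is in place, the exponent bookkeeping that converts the plain norm inequalities into the stated form with exponents $\frac{r}{2-r}$ and $\frac{s}{2-s}$ is routine.
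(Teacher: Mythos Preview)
The paper does not actually prove this theorem; it is quoted from the literature with the remark that the right-hand inequality is ``a standard interpolation argument'' and the left-hand inequality ``corresponds to an extrapolation based on Pisier's extrapolation norm (see \cite{MR2183484})'', together with the observation that $\norm{T_{\tau,2}}_{H^2_N}=1$. Your proposal expands precisely this outline---interpolating between $H^s_N$ and $H^2_N$ for the upper bound and invoking the Pisier-type extrapolation of \cite{MR2183484} for the lower bound---so it matches the paper's indicated approach, with correct exponent bookkeeping.
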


The duality of $H^p$ and $H^{q}$, $1<q<2$, $\frac 1p +\frac 1{q}=1$, gives the following corollary to Theorem \ref{th:interpolation0}. Recall that the adjoint rearrangement operator on $H^{p}_N$ coincides with the inverse rearrangement operator on $H^q_N$.  
\begin{corollary}
\label{co:interpolation1}
For all $2<p<\infty$ there exists a constant $c_p$ such that 
\[\frac{1}{c_p}\norm{T_{\tau,p}}_{H^{p}_N}\leq \norm{T_{\tau^{-1},1}}_{H^1_N}^{1-\frac{2}{p}}\leq c_p\norm{T_{\tau,p}}_{H^{p}_N}.\]
\end{corollary}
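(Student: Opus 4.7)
The plan is to derive this corollary from Theorem \ref{th:interpolation0} applied to the inverse rearrangement $\tau^{-1}$, combined with the duality between $H^p_N$ and $H^q_N$ for conjugate exponents. First I would fix $2<p<\infty$ and set $q=p/(p-1)\in(1,2)$.

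Applying Theorem \ref{th:interpolation0} to the rearrangement induced by $\tau^{-1}$ with parameters $s=1$ and $r=q$ yields
\[
\frac{1}{c_{q,1}}\,\norm{T_{\tau^{-1},1}}_{H^1_N}\leq \norm{T_{\tau^{-1},q}}_{H^q_N}^{\frac{q}{2-q}}\leq c_{q,1}\,\norm{T_{\tau^{-1},1}}_{H^1_N}.
\]
A short computation shows $\tfrac{q}{2-q}=\tfrac{p}{p-2}$, so raising all three terms to the power $1-\tfrac{2}{p}=\tfrac{p-2}{p}$ I would obtain
\[
\frac{1}{c_{q,1}^{1-2/p}}\,\norm{T_{\tau^{-1},1}}_{H^1_N}^{1-\frac{2}{p}}\leq \norm{T_{\tau^{-1},q}}_{H^q_N}\leq c_{q,1}^{1-2/p}\,\norm{T_{\tau^{-1},1}}_{H^1_N}^{1-\frac{2}{p}}.
\]

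To conclude, I would use that the adjoint of $T_{\tau,p}$ on $H^p_N$ coincides, up to constants depending only on $p$, with $T_{\tau^{-1},q}$ on $H^q_N$. This is exactly the mechanism that produces the estimate \eqref{eq:duality} for the $\BMON$--$H^1_N$ pairing, applied now to the reflexive pairing $(H^p_N)^{*}\simeq H^q_N$ supplied by Paley's theorem. Combining the resulting equivalence $\norm{T_{\tau,p}}_{H^p_N}\asymp\norm{T_{\tau^{-1},q}}_{H^q_N}$ with the two-sided estimate above and absorbing every constant into a single $c_p$ delivers the claim.

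The one point that needs care, though it is essentially routine, is the duality bookkeeping: a direct computation using the orthogonality relations of the Haar system shows that under the $L^2$ pairing the adjoint of the $L^p$-normalised operator $T_{\tau,p}\colon h_I/\abs{I}^{1/p}\mapsto h_{\tau(I)}/\abs{\tau(I)}^{1/p}$ is precisely $T_{\tau^{-1},q}$, and one must verify that the resulting equivalence constant depends only on Paley's constant $A_p$ and not on $N$. Once this is in place, the proof reduces to the algebraic manipulation of exponents carried out above.
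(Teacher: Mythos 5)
Your argument is correct and is exactly the route the paper intends: apply Theorem \ref{th:interpolation0} to $\tau^{-1}$ with $s=1$, $r=q=p/(p-1)$, note $\tfrac{q}{2-q}=\tfrac{p}{p-2}$, and transfer the result to $T_{\tau,p}$ via the adjoint identification $\left(T_{\tau,p}\right)^{*}=T_{\tau^{-1},q}$ under the Paley duality $(H^p_N)^{*}\simeq H^q_N$, with constants independent of $N$. The paper states this only as a one-line remark before the corollary, so your write-up simply fills in the same computation in more detail.
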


\begin{remark}
\label{re:universalbound}
Observe that by the above theorem and corollary rearrangement operators $T_{\tau,p}$ on $H^p_N$, $0<p<\infty$, induced by any bijective map $\tau$ acting on $\D_{N}$, have the norm estimate 
\[\norm{T_{\tau,p}}_{H^p_N}\leq c_{p}\,{(N+1)^{\abs{\frac{1}{p}-\frac{1}{2}}}}.\]
\end{remark}

\section{The main theorem}
\label{sec:proof1}
Let $\tau_N$ be the bijective map on the dyadic intervals that associates to the $n^{th}$ interval in postorder the $n^{th}$ interval in lexicographic order, cf.~figure \ref{fig:fig1}. This rearrangement is called \textit{postorder rearrangement}. Its inverse, which associates to the $n^{th}$ interval in lexicographic order the $n^{th}$ interval in postorder, is denoted by $\sigma_N$.
\begin{figure}[H]
 	\centering
 			\includegraphics[width=0.85\textwidth,viewport=118 285 475 700]{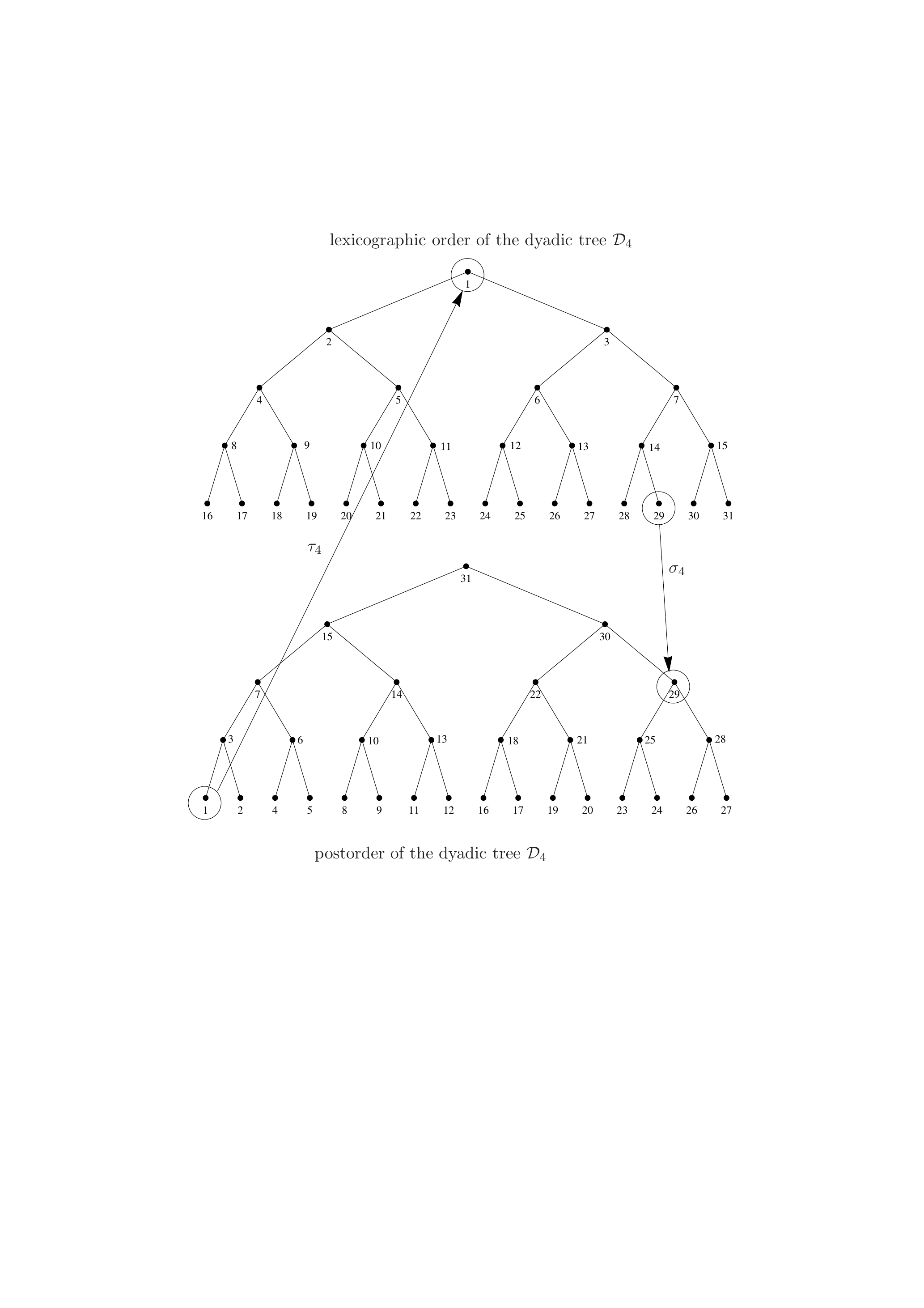}
 	\caption{Lexicographic order and postorder of the dyadic tree $\D_4$. Postorder rearrangement $\tau_4$ on $\D_4$ and its inverse $\sigma_4$.}
 	\label{fig:fig1}	
\end{figure}

The rearrangements $\tau_N$ and $\sigma_N$ induce rearrangement operators on $\BMON$ and on the $H^p_N$-spaces. 
On $\BMON$ we consider the rearrangement operators
\[T_{\tau_N}\colon h_I \mapsto h_{\tau_N(I)} \quad \text{and} \quad T_{\sigma_N}\colon h_I \mapsto h_{\sigma_N(I)}\]
and obtain the following norm estimates for these rearrangement operators applied to functions with Haar support in the sets $\T^N_{\ell,0}=\{I \in \D_N:\, I \subseteq I_{\ell,0}\}$ and $\D_{N-\ell}$.
Recall that $\M(\T^N_{\ell,0})=\lin{\{h_I: I \in \T^N_{\ell,0}\}}$ and $\M(\D_{N-\ell})=\lin{\{h_I: I \in \D_{N-\ell}\}}.$ 
\begin{theorem}
\label{th:operatornorm1}
Let $N \in \NN_0$ and $0\leq \ell \leq N$. Let $T=T_{\tau_N}\big\vert_{\M(\T_{\ell,0}^N)}$ or $T=T_{\sigma_N}\big\vert_{\M(\D_{N-\ell})}$.
Then
\begin{equation}
\label{eq:BMO1}
\frac{1}{\sqrt{2}}(N-\ell+1)^{\frac{1}{2}}\leq\normBMON{T}{}\leq (N-\ell+1)^{\frac{1}{2}}.
\end{equation}
\end{theorem}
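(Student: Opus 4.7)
For the upper bound I would apply the rough estimate (\ref{eq:rear1}). For $T=T_{\tau_N}|_{\M(\T_{\ell,0}^N)}$, any $x$ has Haar support contained in the leftmost complete subtree $\T_{\ell,0}^N$; by the leftmost-first recursion of the postorder, $\T_{\ell,0}^N$ occupies the first $2^{N-\ell+1}-1$ postorder positions of $\D_N$, so $\tau_N$ sends it bijectively onto the first $2^{N-\ell+1}-1$ lexicographic positions, namely $\D_{N-\ell}$. Since $\car{\D_{N-\ell}}=N-\ell+1$ by (\ref{eq:subtree}), (\ref{eq:rear1}) gives the desired upper bound $\normBMO{Tx}{}\le (N-\ell+1)^{1/2}\normBMO{x}{}$. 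The same argument with roles swapped, using $\sigma_N(\D_{N-\ell})=\T_{\ell,0}^N$ and $\car{\T_{\ell,0}^N}=N-\ell+1$, handles $T_{\sigma_N}|_{\M(\D_{N-\ell})}$.

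For the lower bound on $T_{\sigma_N}|_{\M(\D_{N-\ell})}$, I would test against the characteristic function of the leaves of $\D_{N-\ell}$,
\[x=\sum_{k=0}^{2^{N-\ell}-1}h_{I_{N-\ell,k}},\]
which satisfies $\normBMO{x}{}=1$ by disjointness. The intervals $I_{N-\ell,k}$ occupy the lexicographic positions $2^{N-\ell},\dots,2^{N-\ell+1}-1$ of $\D_N$; their $\sigma_N$-images fill the same range of postorder positions, which by the recursive postorder decomposition of $\T_{\ell,0}^N$ is exactly the right subtree $\T_{\ell+1,1}^N$ together with the root $I_{\ell,0}$. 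Evaluating the BMO-norm at $I_{\ell,0}$ and using $\car{\T_{\ell+1,1}^N}=N-\ell$ yields
\[\normBMO{T_{\sigma_N}x}{2}\ge \frac{1}{|I_{\ell,0}|}\Bigl(|I_{\ell,0}|+(N-\ell)|I_{\ell+1,1}|\Bigr)=1+\frac{N-\ell}{2}\ge \frac{N-\ell+1}{2}.\]

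For $T_{\tau_N}|_{\M(\T_{\ell,0}^N)}$ I would test against the characteristic function $y=\sum_{k=0}^{2^{N-\ell}-1}h_{I_{N,k}}$ of the leaves of $\T_{\ell,0}^N$, with $\normBMO{y}{}=1$. The main obstacle is a clean combinatorial identification of the image $\tau_N(\{I_{N,k}\})\subseteq \D_{N-\ell}$. Writing $L_M$ for the set of postorder positions of the leaves of a depth-$M$ complete dyadic tree, the recursion $L_M=L_{M-1}\sqcup(L_{M-1}+2^M-1)$ yields by induction that $L_M$ contains the position $1$ and exactly $2^{m-1}$ positions in $[2^m,2^{m+1}-1]$ for each $1\le m\le M$. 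With $M=N-\ell$, the image $\tau_N(\{I_{N,k}\})$ therefore contains $I_{0,0}$ and exactly $2^{m-1}$ intervals of length $2^{-m}$ at each level $1\le m\le N-\ell$ of $\D_{N-\ell}$; evaluating the BMO-norm at $I_{0,0}$ gives
\[\normBMO{T_{\tau_N}y}{2}\ge 1+\sum_{m=1}^{N-\ell}2^{m-1}\cdot 2^{-m}=1+\frac{N-\ell}{2}\ge \frac{N-\ell+1}{2}.\]
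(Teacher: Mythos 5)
Your proposal is correct, and its overall strategy coincides with the paper's: the upper bounds via the rough estimate (\ref{eq:rear1}) together with $\tau_N(\T_{\ell,0}^N)=\D_{N-\ell}$ and $\sigma_N(\D_{N-\ell})=\T_{\ell,0}^N$, and the lower bounds by testing on the lowermost level of the relevant tree and counting image intervals per level. The interesting difference is in how you establish the combinatorial core. For the $T_{\sigma_N}$ bound the paper tests on $\tau_N(\T^N_{\ell+1,1})$ (the leaves of $\D_{N-\ell}$ minus the one mapping to the root $I_{\ell,0}$), getting $\car{\T^N_{\ell+1,1}}=N-\ell$ directly from Theorem \ref{th:kg0}; you keep that extra leaf and evaluate at $I_{\ell,0}$, getting $1+\tfrac{N-\ell}{2}$ --- both suffice, and yours handles $\ell=N$ without a separate remark. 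More substantially, for the $T_{\tau_N}$ bound the paper derives the level distribution of $\tau_N(\E^N_{\ell,0})$ from the explicit postorder ordinal formulae (Lemmas \ref{le:alk} and \ref{le:level}, which require the nontrivial identity (\ref{eq:sumtnj})), whereas you obtain the same count --- position $1$ plus exactly $2^{m-1}$ positions in $[2^m,2^{m+1}-1]$ --- from the self-contained recursion $L_M=L_{M-1}\sqcup(L_{M-1}+2^M-1)$ for the postorder positions of the leaves. Your route is more elementary and bypasses Section 4.1 entirely for the purposes of this theorem; the paper's explicit formulae buy more (they are also needed for Lemma \ref{le:pos} and the precise description of $\tau_N$), but are not necessary here. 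The only cosmetic quibble is that in the $T_{\sigma_N}$ computation the interval $I_{\ell+1,1}$ is undefined when $\ell=N$; since its coefficient $N-\ell$ vanishes there, nothing breaks, but a one-line remark (as in the paper) would tidy this up.
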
 
This theorem in combination with the general upper bound in (\ref{eq:rear}) reveals the extremal nature of the rearrangements $\tau_N$ and $\sigma_N$ in the sense that for $T=T_{\tau_N}$ resp. $T=T_{\sigma_N}$ we have
\begin{equation*}
\frac{1}{\sqrt{2}}R^N(\BMON)\leq  \normBMON{T}{} \leq R^N(\BMON),
\end{equation*}
where $R^N(\BMON)=\sup\Big\{\norm{T_{\tau}\colon \BMON \rightarrow \BMON}:\,\tau\colon \D_N \rightarrow \D_N\,\, \text{bijective}\Big\}.$

\noindent
Obviously, the lower bound in (\ref{eq:BMO1}) is the important one for this result and the statement of Theorem \ref{th:operatornorm1}.  The upper bound in (\ref{eq:BMO1}) is the trivial one that originates from the depth (in the sense of dyadic trees) of the sets $\D_{N-\ell}$ resp.~$\T^N_{\ell,0}$.

\medskip
Theorem \ref{th:lexorder} provides a tool that enables one to gain insight into the rearrangement operators $T_{\sigma_N}$ applied to spaces of functions with Haar support in a lexicographic order interval. 
 In Theorem \ref{th:operatornorm1} we have already seen that on the lexicographic order interval $\D_{N-\ell}$, for some small $\ell$, the operator has very large norm. Theorem \ref{th:lexorder} provides the possibility to determine canonical collections of dyadic intervals on which the rearrangement operator has small norm. The significance of the upper bound in Theorem \ref{th:lexorder} is given by the fact that $\Log2{\frac{1}{\abs{L_1}}}$ is able to compensate the term $N$. 
 In order to obtain the upper bound in Theorem \ref{th:lexorder} we use a geometric representation of order intervals with respect to the postorder, $\preceq$. Hence, one can read off the upper bound from the tree representation of $\D_N$, cf.~figure \ref{fig:figorderint}.  
 
\begin{theorem}
\label{th:lexorder}
Let $N \in \NN_0$. Let $\E=\E(E_1,E_2)$ be the lexicographic order interval given by the dyadic intervals $E_1,E_2 \in \D_N$ with $E_1 \leq_l E_2$. Then
\begin{equation}
\label{eq:Tsigmalex}
\normBMON{T_{\sigma_N}\big\vert_{\M(\E)}}{2}\leq N-\Log2{\frac{1}{\abs{L_1}}}+2,
\end{equation}where $L_1$ is the maximal (with respect to inclusion) dyadic interval in the postorder order interval $\B^N(\sigma_N(E_1),\sigma_N(E_2))$ that contains the left endpoint $\sigma_N(E_1)$. 
\end{theorem}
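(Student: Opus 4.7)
The plan is to reduce the operator norm to an estimate on the Carleson constant of the image set $\sigma_N(\E)$ and then carry out that estimate using the geometric structure of postorder order intervals provided by Proposition~\ref{prop:maxint}. Concretely, the rough rearrangement bound (\ref{eq:rear1}) gives, for any $f\in\M(\E)$,
\[\normBMON{T_{\sigma_N}f}{2}\leq\normBMON{f}{2}\,\car{\sigma_N(\E)},\]
so the theorem follows once I establish $\car{\sigma_N(\E)}\leq N-\Log2{\frac{1}{\abs{L_1}}}+2$. Since $\sigma_N$ sends the $n^{th}$ interval in lexicographic order to the $n^{th}$ interval in postorder, the image of $\E=\E(E_1,E_2)$ is the postorder order interval $\B^N(J_1,J_2)$ with $J_1=\sigma_N(E_1)$, $J_2=\sigma_N(E_2)$. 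Proposition~\ref{prop:maxint} then yields pairwise disjoint maximal intervals $L_1,\dots,L_m$ with $J_1\subseteq L_1$ and the decomposition
\[\B^N(J_1,J_2)=\C(J_1,L_1)\cup\R(J_1,L_1)\cup\bigcup_{i=2}^{m}\M_i.\]

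I estimate the local Carleson sum $\abs{I}^{-1}\sum_{J\subseteq I,\,J\in\B^N(J_1,J_2)}\abs{J}$ uniformly in $I\in\B^N(J_1,J_2)$ by two cases. In the first case, $I$ lies inside one of the complete subtrees $\M_i$ for $i\geq 2$ or inside one of the subtrees $\U_{i+1}$ making up $\R(J_1,L_1)$; then $\{J\in\B^N(J_1,J_2):\,J\subseteq I\}$ coincides with the full dyadic subtree rooted at $I$, whose Carleson constant equals $N-\Log2{\frac{1}{\abs{I}}}+1$ by (\ref{eq:subtree}). Because conditions (1)--(2) of Proposition~\ref{prop:maxint} give $\abs{L_i}\leq\abs{L_1}$ for $i\geq 2$, and every interval in $\U_{i+1}$ sits inside $L_1$, this local contribution is bounded by $N-\Log2{\frac{1}{\abs{L_1}}}+1$. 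In the second case, $I=C_j$ lies on the chain from $J_1$ to $L_1$, and one checks directly that $\{J\in\B^N(J_1,J_2):\,J\subseteq C_j\}=\{C_1,\dots,C_j\}\cup\bigcup_{i<j}\U_{i+1}$. Using $\abs{C_i}/\abs{C_j}=2^{i-j}$ together with (\ref{eq:subtree}), the local Carleson sum splits into two geometric series: the chain part contributes $\sum_{i=1}^{j}2^{i-j}<2$, while the fill-up part is at most $\sum_{i=1}^{j-1}(N-\Log2{\frac{1}{\abs{C_i}}}+1)\,2^{i-j}$. Exploiting the dyadic gap $\Log2{\frac{1}{\abs{C_i}}}\geq\Log2{\frac{1}{\abs{C_j}}}+1$ for $i<j$, I replace the bracketed factor by the cruder $N-\Log2{\frac{1}{\abs{C_j}}}$ and use $\sum_{d\geq 1}2^{-d}=1$ to bound the fill-up sum by $N-\Log2{\frac{1}{\abs{C_j}}}$. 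Altogether the chain case gives local Carleson sum at most $N-\Log2{\frac{1}{\abs{C_j}}}+2\leq N-\Log2{\frac{1}{\abs{L_1}}}+2$, since $\abs{C_j}\leq\abs{L_1}$.

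The main obstacle is the chain case: one has to notice that the ``$+1$'' in the depth factor $N-\Log2{\frac{1}{\abs{C_i}}}+1$ is absorbed by the strict dyadic gap between successive chain elements, allowing the sum over the right fill-ups to collapse to a single geometric series whose total contribution is independent of the pattern of left/right moves along the chain. Combining the two cases yields $\car{\sigma_N(\E)}\leq N-\Log2{\frac{1}{\abs{L_1}}}+2$, and the theorem follows.
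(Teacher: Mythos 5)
Your proposal is correct and follows the same overall route as the paper: reduce via the rough bound (\ref{eq:rear1}) to estimating $\car{\sigma_N(\E)}$, identify $\sigma_N(\E)$ with the postorder order interval $\B^N(\sigma_N(E_1),\sigma_N(E_2))$, and then exploit the cone/right-fill-up decomposition of Proposition~\ref{prop:maxint}. The only divergence is in how the Carleson constant is bounded: the paper (Theorems \ref{th:coneright} and \ref{th:carlesonorderint}) uses the subadditivity $\car{\C\cup\R}\leq\car{\C}+\car{\R}$ together with $\car{\C}\leq 2$ and $\car{\R}=\sup_i\car{\U_{i+1}}\leq N-\Log2{\frac{1}{\abs{L_1}}}$, whereas you compute the local Carleson sum at each chain element $C_j$ directly and absorb the fill-up contribution into a single geometric series via the dyadic gap; both computations are valid and arrive at the identical constant $N-\Log2{\frac{1}{\abs{L_1}}}+2$.
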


Lexicographic order intervals $\E(E_1,E_2)$ with large Carleson constant are given by endpoints $E_1,E_2$ which satisfy the property that $\Log2{\frac{1}{\abs{E_1}}}$ is much smaller than $\Log2{\frac{1}{\abs{E_2}}}$.
The upper bound in Theorem \ref{th:lexorder} depends for these order intervals only on the right endpoint $E_2$. Particularly, the upper bound is given by
\[\normBMON{T_{\sigma_N}\big\vert_{\M(\E)}}{2}\leq \Log2{\frac{1}{\abs{E_2}}}+2.\]  

Note that this upper bound can be obtained by applying Theorem \ref{th:lexorder} to the order interval $\E=\E(I_{0,0}, E_2)$.
\medskip

The duality relation of $H^1_N$ and $\BMON$, in particular the norm equivalence in equation (\ref{eq:duality}), and the interpolation resp.~extrapolation procedure in Theorem \ref{th:interpolation0} and Corollary \ref{co:interpolation1} give equivalent norm estimates as in Theorem \ref{th:operatornorm1} for the rearrangement operators on $H^p_N$, $0<p<\infty$, given by
 	\[T_{\tau_N,p}\colon\frac{h_I}{\abs{I}^{\frac{1}{p}}} \mapsto \frac{h_{\tau_N(I)}}{\abs{\tau_N(I)}^{\frac{1}{p}}} \quad \text{resp.}\quad T_{\sigma_N,p}\colon\frac{h_I}{\abs{I}^{\frac{1}{p}}} \mapsto \frac{h_{\sigma_N(I)}}{\abs{\sigma_N(I)}^{\frac{1}{p}}}.
 \]
 \begin{corollary}
 \label{co:Hp1}
 For all $0< p<\infty$ there exists a constant $C_p$ such that for all $N \in \NN_0$, $0 \leq \ell \leq N$ and $T=T_{\tau_N,p}\big\vert_{\M(\T^N_{\ell,0})}$ or $T=T_{\sigma_N,p}\big\vert_{\M(\D_{N-\ell})}$ the following holds
 \begin{equation}
 \frac{2^{-\abs{\frac{1}{p}-\frac{1}{2}}}}{C_p} \left(N-\ell+1\right)^{\abs{\frac{1}{p}-\frac{1}{2}}}\leq \norm{T}_{H^p_N}\leq C_p\left(N-\ell+1\right)^{\abs{\frac{1}{p}-\frac{1}{2}}}.
 \end{equation}
 \end{corollary}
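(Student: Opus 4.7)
The plan is to transfer the BMO bounds of Theorem \ref{th:operatornorm1} to $H^p_N$ bounds by chaining Fefferman duality (the equivalence (\ref{eq:duality})) with the interpolation/extrapolation machinery of Theorem \ref{th:interpolation0} and Corollary \ref{co:interpolation1}. The essential geometric observation is that the first $2^{N-\ell+1}-1$ ordinals in postorder exhaust exactly $\T^N_{\ell,0}$ (the leftmost deep subtree is traversed first), while the first $2^{N-\ell+1}-1$ ordinals in lexicographic order exhaust $\D_{N-\ell}$ (the top $N-\ell+1$ levels). Hence $\tau_N$ bijects $\T^N_{\ell,0}$ onto $\D_{N-\ell}$, and the restricted operators $T_{\tau_N,p}|_{\M(\T^N_{\ell,0})}$ and $T_{\sigma_N,p}|_{\M(\D_{N-\ell})}$ are mutual inverses between these subspaces for every $p$.

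First I would apply (\ref{eq:duality}) to Theorem \ref{th:operatornorm1}: the adjoint of $T_{\tau_N}|_{\M(\T^N_{\ell,0})}$, viewed as a BMO-to-BMO map into $\M(\D_{N-\ell})$, coincides with $T_{\sigma_N,1}|_{\M(\D_{N-\ell})}$ on $H^1_N$ (and symmetrically for $\tau_N\leftrightarrow\sigma_N$), so
\[
\|T_{\sigma_N,1}|_{\M(\D_{N-\ell})}\|_{H^1_N} \asymp \|T_{\tau_N,1}|_{\M(\T^N_{\ell,0})}\|_{H^1_N} \asymp (N-\ell+1)^{1/2},
\]
with constants depending only on $C_F=2\sqrt{2}$.

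Next, for $0<p<2$ I would apply Theorem \ref{th:interpolation0} with $s=\min(p,1)$ and $r=\max(p,1)$ to the restricted operators, yielding
\[
\|T_{*,p}|_{\M(\A)}\|_{H^p_N}^{p/(2-p)} \asymp \|T_{*,1}|_{\M(\A)}\|_{H^1_N} \asymp (N-\ell+1)^{1/2},
\]
and hence $\|T_{*,p}|_{\M(\A)}\|_{H^p_N}\asymp (N-\ell+1)^{(2-p)/(2p)}=(N-\ell+1)^{|1/p-1/2|}$. For $2<p<\infty$, Corollary \ref{co:interpolation1} applied to $T_{\tau_N,p}|_{\M(\T^N_{\ell,0})}$ (using $(T_{\tau_N,p})^{-1}=T_{\sigma_N,p}$) gives
\[
\|T_{\tau_N,p}|_{\M(\T^N_{\ell,0})}\|_{H^p_N} \asymp \|T_{\sigma_N,1}|_{\M(\D_{N-\ell})}\|_{H^1_N}^{1-2/p} \asymp (N-\ell+1)^{(p-2)/(2p)} = (N-\ell+1)^{|1/p-1/2|},
\]
and symmetrically for $\sigma_N$. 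The case $p=2$ is trivial since $\|T_{*,2}\|_{H^2_N}=1$. Raising the constant $1/\sqrt{2}$ from Theorem \ref{th:operatornorm1} to the exponent $(2-p)/p$ (respectively $(p-2)/p$) produces exactly the factor $2^{-|1/p-1/2|}$ appearing in the lower bound.

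The only piece of care is that Theorem \ref{th:interpolation0} and Corollary \ref{co:interpolation1} are stated for rearrangement operators on all of $\D_N$, whereas I need them for operators restricted to $\M(\T^N_{\ell,0})$ or $\M(\D_{N-\ell})$. I would observe that Pisier's extrapolation argument and the standard interpolation argument underlying those results only exploit $H^p_N$-boundedness of the underlying linear map and therefore carry over verbatim once the domain is restricted to $\M(\A)$. I expect this bookkeeping to be the only non-mechanical step; everything else is direct substitution from already-available estimates.
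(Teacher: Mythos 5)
Your proposal is correct and follows exactly the route the paper indicates (the paper gives no separate proof of Corollary \ref{co:Hp1}, only the one-sentence prescription of chaining the duality equivalence (\ref{eq:duality}) with Theorem \ref{th:interpolation0} and Corollary \ref{co:interpolation1} applied to Theorem \ref{th:operatornorm1}); your identification of the restricted adjoints, the exponent arithmetic producing $(N-\ell+1)^{\abs{\frac{1}{p}-\frac{1}{2}}}$, and the origin of the factor $2^{-\abs{\frac{1}{p}-\frac{1}{2}}}$ from raising $1/\sqrt{2}$ to the relevant power all match. Your remark that the interpolation/extrapolation statements must be checked to survive restriction to $\M(\T^N_{\ell,0})$ and $\M(\D_{N-\ell})$ is a point the paper passes over silently, and your justification of it is sound.
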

 \begin{remark}
 By the convexification method (\cite{MR540367,MR871851}, see also \cite{MulPent} for the concrete specialisation to Hardy spaces) one obtains the same result as in Corollary \ref{co:Hp1} for the more general Triebel-Lizorkin spaces. 
 \end{remark}
 Corollary \ref{co:Hp1} gives, considering the general upper bound in Remark \ref{re:universalbound}, the same extremality statement for the rearrangement operators $T=T_{\tau_N,p}$ resp.~$T=T_{\sigma_N,p}$ on the spaces $H^p_N$, $0<p<\infty$. For all $0<p<\infty$ there exists a constant $B_p$ such that
 \begin{equation*}
\frac{2^{-\abs{\frac{1}{p}-\frac{1}{2}}}}{B_p}\,R^N(H^p_N)\leq  \norm{T}_{H^p_N} \leq R^N(H^p_N),
 \end{equation*}where $R^N(H^p_N)=\sup\Big\{\norm{T_{\tau}\colon H^p_N \rightarrow H^p_N}:\,\tau\colon \D_N \rightarrow \D_N\,\, \text{bijective}\Big\}.$

\section{Proof of Theorem \ref{th:operatornorm1}}
\subsection{Parameters associated with the postorder rearrangement}
\label{sec:postrear}
For the proof of the main theorem we need formulae that describe the map $\tau_N$ precisely. Recall that $\tau_N$ maps the $n^{th}$ dyadic interval in postorder onto the $n^{th}$ dyadic interval in lexicographic order. 
First of all we give formulae that describe the assignment of postorder ordinal numbers and lexicographic ordinal numbers to the dyadic intervals $I_{\ell,k} \in \D_N$.  
We denote by $a^{\ell}(k)$ the postorder ordinal number and by $b^{\ell}(k)$ the lexicographic ordinal number of the dyadic interval $I_{\ell,k}$. 

The assignment rule for a lexicographic ordinal number to a dyadic interval $I_{\ell,k}$ is given by
	\[b^{\ell}(k)=\left(\sum_{i=0}^{{\ell}-1}{2^i}\right)+k+1=2^{\ell}+k.
\]
We can determine from the ordinal number $b^{\ell}(k)$ the level ${\ell}$ and the position $k$ of the associated interval $I_{{\ell},k}$:
\begin{equation}
\label{eq:lexnew}
{\ell}=\lfloor \Log2{b^{\ell}(k)}\rfloor \quad \text{and} \quad 
k=b^{\ell}(k)-2^{\ell}.
\end{equation}

 The assignment rule for postorder ordinal numbers to the dyadic intervals is more difficult than in the lexicographic case. Let $j\in \NN$ with dyadic expansion $j=\sum \epsilon_i\, 2^i$. We define $m(j)=\min{\{i\in \NN:\,\epsilon_i \neq 0\}}$.  
\begin{lemma}
\label{le:alk}
Let $N \in \NN_0$, $0 \leq \ell \leq N$ and $0 \leq k \leq 2^{\ell}-1$. The postorder ordinal number of the dyadic interval $I_{\ell,k} \in \D_N$ is given by
 \begin{equation}
a^{\ell}(k)=(k+1)\,(2^{N-\ell+1}-1)+\sum_{j=1}^{k}{m(j)}.
\label{eq:al2}
\end{equation}
\end{lemma}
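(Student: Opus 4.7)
The plan is to prove the formula by induction on $k$, via a recursion relating $a^{\ell}(k)$ and $a^{\ell}(k-1)$.

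First I would establish the base case $a^{\ell}(0) = 2^{N-\ell+1}-1$. Since $I_{\ell,0}$ is the root of the complete dyadic subtree $\T^N_{\ell,0}$ of depth $N-\ell$, and postorder visits the root of every subtree last among the nodes in that subtree, the postorder ordinal number of $I_{\ell,0}$ equals the number of nodes in $\T^N_{\ell,0}$, namely $2^{N-\ell+1}-1$.

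Next I would prove the recursion
\begin{equation*}
a^{\ell}(k) = a^{\ell}(k-1) + (2^{N-\ell+1}-1) + m(k), \qquad k \geq 1,
\end{equation*}
by counting the nodes visited in postorder strictly between $I_{\ell,k-1}$ and $I_{\ell,k}$. After visiting $I_{\ell,k-1}$ the traversal moves up the tree so long as the current node is a right child of its parent, then descends into the leftmost leaf of the right sibling's subtree. At level $\ell'$ a node $I_{\ell',k'}$ is a right child if and only if $k'$ is odd, so the number of ancestors of $I_{\ell,k-1}$ visited before the next subtree is entered equals the number of trailing $1$-bits in the binary expansion of $k-1$. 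Writing $k = A \cdot 2^{m(k)+1} + 2^{m(k)}$ for some $A \in \NN_0$, one gets $k-1 = A \cdot 2^{m(k)+1} + (2^{m(k)}-1)$, so $k-1$ has exactly $m(k)$ trailing $1$-bits; hence precisely $m(k)$ intermediate ancestors are visited. Afterwards the traversal enters the subtree rooted at $I_{\ell,k}$ at its leftmost leaf and runs through its $2^{N-\ell+1}-1$ nodes, with $I_{\ell,k}$ last, giving the stated recursion. Summing from $j=1$ to $k$ yields
\begin{equation*}
a^{\ell}(k) = a^{\ell}(0) + k(2^{N-\ell+1}-1) + \sum_{j=1}^{k} m(j) = (k+1)(2^{N-\ell+1}-1) + \sum_{j=1}^{k} m(j),
\end{equation*}
as claimed.

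The main obstacle is the combinatorial step identifying the number of upward ancestor-visits between $I_{\ell,k-1}$ and $I_{\ell,k}$'s subtree with $m(k)$. Once one verifies that at every level the right-child condition corresponds to the parity of the level-index, and that the number of trailing $1$-bits in $k-1$ coincides with the position $m(k)$ of the lowest set bit of $k$, the remainder is a routine telescoping sum.
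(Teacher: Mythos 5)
Your proof is correct, and its overall skeleton coincides with the paper's: both start from $a^{\ell}(0)=2^{N-\ell+1}-1$, establish that consecutive level-$\ell$ intervals satisfy $a^{\ell}(k)=a^{\ell}(k-1)+2^{N-\ell+1}-1+m(k)$, and telescope. The difference lies in how the increment is justified. The paper proceeds algebraically: it first proves the parent--child identities $a^{\ell}(k)=a^{\ell+1}(2k+1)+1$ and $a^{\ell}(k)=a^{\ell+1}(2k)+2^{N-\ell}$, iterates them by induction on the number of levels, and extracts $t^{\ell}(j)=m(j)+2^{N-\ell+1}-2$ by splitting into the cases $j$ odd and $j=2^is$ with $s$ odd. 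You instead count directly the set of intervals strictly between $I_{\ell,k-1}$ and $I_{\ell,k}$ in postorder, by following the successor walk: $m(k)$ ancestors of $I_{\ell,k-1}$ are visited (those whose right endpoint agrees with that of $I_{\ell,k-1}$, i.e.\ the number of trailing $1$-bits of $k-1$, which you correctly identify with $m(k)$), followed by the $2^{N-\ell+1}-2$ proper descendants of $I_{\ell,k}$. Your route is more transparent combinatorially and avoids the two-case analysis and the auxiliary induction \eqref{eq:ind1}; the paper's route produces as a by-product the explicit parent--child recursions \eqref{eq:recursive}, which it reuses elsewhere. One small point worth making explicit in your write-up: after the upward walk terminates at a left child $v$, the postorder restricted to the subtree of $v$'s right sibling begins with the full subtree $\T^N_{\ell,k}$ (since $I_{\ell,k}$ contains the left endpoint of that sibling), which is what guarantees that exactly the $2^{N-\ell+1}-1$ nodes below and including $I_{\ell,k}$ come next with $I_{\ell,k}$ last.
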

\begin{proof}
Let $1\leq j \leq 2^{\ell}-1$ and let 
\begin{equation}
\label{eq:tlj}
t^{\ell}(j)=a^{\ell}(j)-a^{\ell}(j-1)-1,
\end{equation}
where $a^{\ell}(j-1)$ and $a^{\ell}(j)$ are the postorder ordinal numbers of two successive dyadic intervals in level $\ell$. 
This gives the recursive formula $a^{\ell}(j)=a^{\ell}(j-1)+t^{\ell}(j)+1$
and thereby  
the assignment rule for the postorder ordinal number: 
\begin{equation}
\begin{split}
a^{\ell}(k)&=a^{\ell}(0)+k+\sum_{j=1}^{k}{t^{\ell}(j)}.
\end{split}
\label{eq:al}
\end{equation}
The definition of the postorder and the dyadic tree structure of $\D_N$ yield 
\begin{equation}
\label{eq:al0}
a^{\ell}(0)=2^{N-\ell+1}-1, \quad \text{for all } 0 \leq \ell \leq N.
\end{equation}

In the following we determine a formula for $t^{\ell}(j)$, $1\leq j \leq 2^{\ell}-1$. To this end, we give formulae that associate the postorder ordinal number of a dyadic interval with the postorder ordinal number of its parent.
We consider the dyadic interval $I_{\ell,k} \in \D_N$ with the postorder ordinal number $a^{\ell}(k)$ and its children $I_{\ell+1,2k}$ and $I_{\ell+1,2k+1}$ with the  postorder ordinal numbers $a^{\ell+1}(2k)$ and $a^{\ell+1}(2k+1)$.
By the definition of the postorder we have $a^{\ell+1}(2k)< a^{\ell+1}(2k+1)<  a^{\ell}(k)$. Furthermore, $a^{\ell+1}(2k)$ is smaller and $a^{\ell+1}(2k+1)$ is greater than the ordinal numbers of the descendants of $I_{\ell+1,2k+1}$. The number of descendants of $I_{\ell+1,2k+1}$ is $2^{N-\ell}-2$.  Hence, the definition of the postorder yields the following recursions:
\begin{align}
\label{eq:recursive}
a^{\ell}(k)&=a^{\ell+1}(2k+1)+1 \quad\text{ and } \quad a^{\ell}(k)=a^{\ell+1}(2k)+2^{N-\ell},
\end{align}where $0 \leq \ell \leq N$ and $0 \leq k \leq 2^{\ell}-1$.
Induction shows that for $1\leq i<\ell$
\begin{align}
\label{eq:ind1}
a^{\ell-i}\left(s-1\right)=a^{\ell}(2^i s-1)+i  \quad \text{and} \quad  a^{\ell-i}\left(s\right)=a^{\ell}(2^is)+2^{N-\ell+1}(2^i-1),
\end{align}where $1\leq s\leq 2^{\ell-i}-1$. 

Now we can determine an explicit formula for $t^{\ell}(j)$. 
If $j$ is odd,
then the formulae in (\ref{eq:recursive}) yield $a^{\ell-1}\left(\frac{j-1}{2}\right)=a^{\ell}(j)+1$ and $a^{\ell-1}\left(\frac{j-1}{2}\right)=a^{\ell}(j-1)+2^{N-\ell+1}$.
Therefore, by equation (\ref{eq:tlj})  
\begin{equation*}
t^{\ell}(j)=2^{N-\ell+1}-2, \quad \text{ if $j$ is odd.} 
\end{equation*}
If $j$ is even, then there exists an integer $i$, $1\leq i<\ell$, given by $i=m(j)$, and an odd integer $s$, $1\leq s\leq 2^{\ell-i}-1$ such that $j=2^is$. Equation (\ref{eq:tlj}) and the formulae in (\ref{eq:ind1}) yield
\begin{equation}
\label{eq:tl2i}
\begin{split}
t^{\ell}(s2^i)&=a^{\ell}(2^is)- a^{\ell}(2^is-1)-1\\
&=a^{\ell-i}(s)-2^{N-\ell+1}(2^i-1) - a^{\ell-i}(s-1)+i-1.
\end{split}
\end{equation}
Note that $s$ is odd. The formulae in (\ref{eq:recursive}) yield $a^{\ell-i}(s)=a^{\ell-i-1}\left(\frac{s-1}{2}\right)-1$ and $a^{\ell-i}(s-1)=a^{\ell-i-1}\left(\frac{s-1}{2}\right)-2^{N-\ell+i+1}$.
Therefore, by equation (\ref{eq:tl2i}) we have 
\begin{equation*}
t^{\ell}(s2^i)= 2^{N-\ell+1}-2+i.
\end{equation*}
Summarizing the above we have for all $1\leq j \leq 2^{\ell}-1$
\begin{equation}
\label{eq:tl}
	t^{\ell}(j)=m(j)+2^{N-{\ell}+1}-2.
\end{equation}Note that $m(j)=0$, if $j$ is odd.
Putting this into equation (\ref{eq:al}) yields the statement.

\end{proof}

Given the ordinal numbers of a dyadic interval with respect to both the postorder and the lexicographic order on $\D_N$ we can describe the postorder rearrangement $\tau_N$ as follows. 
Let $I_{\ell,k} \in \D_N$ and $a^{\ell}(k)$ the corresponding postorder ordinal number. 
Let $L$ and $K$ be non-negative integers such that $a^{\ell}(k)=2^L+K$. Recall that $2^L+K$ is the lexicographic ordinal number of the dyadic interval $I_{L,K} \in \D_N$. Then the \textit{postorder rearrangement $\tau_N$} is the bijective map on $\D_N$ that maps the dyadic interval $I_{{\ell},k}$ onto the dyadic interval $I_{L,K}$,  cf.~figure \ref{fig:fig1}.

In the following section we describe the determination of $L$ and $K$ such that $a^{\ell}(k)=2^L+K$. In the following we use the notation
\begin{align*}
\Level(a^{\ell}(k))= L \quad \text{and} \quad \Pos( a^{\ell}(k))=K.
\end{align*}  
According to (\ref{eq:lexnew}) we have
\begin{equation}
\Level(a^{\ell}(k))=\lfloor\Log2{(a^{\ell}(k))}\rfloor \quad \text{and}\quad  
\Pos(a^{\ell}(k))=a^{\ell}(k)-2^{\Level(a^{\ell}(k))}.
\label{eq:mallatnew}
\end{equation}
The following two Lemmata give formulae for $\Level(a^{\ell}(k))$ and $\Pos(a^{\ell}(k))$, which do not involve the postorder ordinal number $a^{\ell}(k)$ but only the level $\ell$ and the position $k$ of the corresponding dyadic interval $I_{\ell,k}$. 

\begin{lemma}
\label{le:level}
Let $N \in \NN_0$ and $0\leq {\ell} \leq N$.
For all $0\leq k \leq 2^{\ell}-1$ we have
 \begin{equation}
\label{eq:level}
\Level(a^{\ell}(k))=\left\lceil \Log2{(k+1)}\right\rceil+N-{\ell}.
 \end{equation}
\end{lemma}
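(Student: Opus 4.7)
The plan is to reformulate the equality $\Level(a^\ell(k))=\lceil\Log2(k+1)\rceil+N-\ell$ as the two-sided estimate
\[2^{M+N-\ell}\le a^\ell(k)<2^{M+N-\ell+1},\qquad M:=\lceil\Log2(k+1)\rceil,\]
since by definition $\Level(a^\ell(k))=\lfloor\Log2 a^\ell(k)\rfloor$. For a tractable formula I would start from Lemma \ref{le:alk}. The quantity $m(j)$ is precisely the $2$-adic valuation $v_2(j)$, so Legendre's identity $\sum_{j=1}^{k}v_2(j)=k-s_2(k)$, with $s_2(k)$ the binary digit sum, collapses the sum in (\ref{eq:al2}) to give
\[a^\ell(k)=(k+1)\,2^{n}-1-s_2(k),\qquad n:=N-\ell+1.\]

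The upper bound $a^\ell(k)<2^{M+n}$ is immediate from $k+1\le 2^M$. For the lower bound I would proceed by strong induction on $k$. The base case $k=0$ (so $M=0$) holds since $a^\ell(0)=2^n-1\ge 2^{n-1}$. In the step, split on whether $k+1$ is a power of $2$. If $k+1=2^M$, then $s_2(k)=M$ and $a^\ell(k)=2^{M+n}-1-M$; the bound reduces to $2^{M+n-1}\ge M+1$, which follows from $n\ge 1$ and $2^M\ge M+1$. Otherwise $k\ge 2^{M-1}$; setting $k'':=k-2^{M-1}\in\{0,\dots,2^{M-1}-1\}$ and using $s_2(k)=1+s_2(k'')$ (no carry, since $k''<2^{M-1}$), direct substitution into the closed form yields the key recursion
\[a^\ell(k)=2^{M+n-1}+a^\ell(k'')-1.\]
Since $k''<k$, the induction hypothesis applied to $k''$ gives $a^\ell(k'')\ge 2^{M''+n-1}\ge 1$ with $M''=\lceil\Log2(k''+1)\rceil\ge 0$, whence $a^\ell(k)\ge 2^{M+n-1}$.

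The main obstacle is the $-s_2(k)$ correction. A naive direct approach, combining $k+1\ge 2^{M-1}+1$ with $s_2(k)\le M$, leaves a slack of order $M$ and forces a spurious condition of the form $2^{N-\ell+1}\ge M+1$ that fails when $N-\ell$ is small and $M$ is close to $\ell$. The recursion above sidesteps this by absorbing the $-s_2$ loss into the inductive step, producing a clean estimate that is uniform in $\ell,k,N$.
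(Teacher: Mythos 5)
Your proof is correct, and it takes a genuinely different route from the paper's. You first collapse the sum in Lemma \ref{le:alk} via Legendre's identity $\sum_{j=1}^{k}m(j)=k-s_2(k)$ to obtain the closed form $a^{\ell}(k)=(k+1)2^{N-\ell+1}-1-s_2(k)$ for \emph{every} $k$, and then verify the two-sided dyadic bracketing $2^{M+N-\ell}\leq a^{\ell}(k)<2^{M+N-\ell+1}$ directly, handling the lower bound by strong induction on $k$ through the recursion $a^{\ell}(k)=2^{M+N-\ell}+a^{\ell}(k'')-1$ with $k''=k-2^{M-1}$. The paper instead evaluates $a^{\ell}(k)$ only at the two endpoints $k=2^{s-1}$ and $k=2^{s}-1$ of each block $\{k:\lceil\Log2(k+1)\rceil=s\}$ (using the special case $\sum_{j=1}^{2^{s-1}}m(j)=2^{s-1}-1$ of your Legendre identity, derived there by a combinatorial expansion), checks that $\Level$ takes the value $s+N-\ell$ at both endpoints, and concludes by the monotonicity of $k\mapsto\Level(a^{\ell}(k))$. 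Your version buys a uniform closed formula valid for all $k$ (potentially reusable, e.g., in Lemma \ref{le:pos}) and avoids the explicit appeal to monotonicity, at the cost of an induction; the paper's version avoids the induction but needs the monotonicity observation and two separate endpoint computations. Your diagnosis of why the naive estimate $s_2(k)\leq M$ is too lossy, and how the recursion absorbs that loss, is accurate; the only cosmetic slack is that $k''$ actually ranges in $\{0,\dots,2^{M-1}-2\}$ rather than $\{0,\dots,2^{M-1}-1\}$, which does not affect the argument.
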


\begin{proof}
The definition of the postorder yields $a^{\ell}(0)=2^{N-{\ell}+1}-1$. Therefore, by equation (\ref{eq:mallatnew}) we have $\Level(a^{\ell}(0))=N-{\ell}$.

Now we show that for all $1\leq s \leq \ell$ and for all $2^{s-1}\leq k \leq 2^s-1$
\begin{equation}
\label{eq:level2}
\Level{(a^{\ell}(k))}=s+N-{\ell}.
\end{equation}
By Lemma \ref{le:alk} we have
\begin{equation}
\label{eq:proof1}
\begin{split}
a^{\ell}(2^{s-1})
&=2^{N-{\ell}+s}+2^{N-{\ell}+1}-2^{s-1}-1+\sum_{j=1}^{2^{s-1}}m(j). 
\end{split}
\end{equation} Recall that for $j\in \NN$ given by its dyadic expansion $j=\sum \epsilon_i\,2^i$ we have $m(j)=\min{\{i\in \NN:\,\epsilon_i \neq 0\}}$. Hence, $m(j)=0$ for all odd integers $j$. We can split the sum on the right-hand side of equation (\ref{eq:proof1}) as follows
\begin{align*}
\sum_{j=1}^{2^{s-1}}m(j)=\sum_{j=1}^{2^{s-2}}{m(2 j)} =&\sum_{j_0=1}^{s-1}{m(2^{j_0})}+\sum_{j_1=1}^{s-2}\sum_{j_2=1}^{j_1-1}{m(2^{j_1}+2^{j_2}})+\cdots\\
&\cdots+\sum_{j_1=1}^{s-2}\cdots\sum_{j_{s-2}=1}^{j_{s-3}-1}{m(2^{j_1}+\cdots+2^{j_{s-2}})}.
\end{align*}
By definition, $m(2^{j_0})=j_0$ and $m(2^{j_1}+\cdots+2^{j_{i}})=j_{i}$ for $2 \leq i \leq s-2$.
Hence,
\begin{equation}
\label{eq:sumtnj}
\begin{split}
\sum_{j=1}^{2^{s-1}}{m(j)}&=\sum_{j_0=1}^{s-1}{j_0}+\sum_{j_1=1}^{s-2}\sum_{j_2=1}^{j_1-1}{j_2}+\cdots+\sum_{j_1=1}^{s-2}\cdots\sum_{j_{s-2}=1}^{j_{s-3}-1}{j_{s-2}}\\
&=\sum_{k=1}^{s-1}{\binom{s-1}{k}}=2^{s-1}-1.
\end{split}
\end{equation}
Putting this into formula (\ref{eq:proof1}) we get $a^{\ell}(2^{s-1})=2^{N-{\ell}+1}+2^{N-{\ell}+s}-2.$
Lemma \ref{le:alk} yields
\begin{equation*}
a^{\ell}(2^{s}-1)=2^{N-{\ell}+s+1}-2^{s}-m(2^s)+\sum_{j=1}^{2^{s}}{m(j)}.
\end{equation*}
Note that $m(2^s)=s$. By equation (\ref{eq:sumtnj}) we have $a^{\ell}(2^{s}-1)=2^{N-{\ell}+s+1}-s-1.$
Equation (\ref{eq:mallatnew}) yields
\begin{align*}
\Level(a^{\ell}(2^{s-1}))&=\lfloor\Log2{(2^{N-{\ell}+1}+2^{N-{\ell}+s}-2)}\rfloor=N-{\ell}+s,\\
\Level(a^{\ell}(2^{s}-1))&=\lfloor\Log2{(2^{N-{\ell}+1+s}-s-1)}\rfloor=N-{\ell}+s.
\end{align*}
Note that the map $k \mapsto \Level(a^{\ell}(k))$ is monotonically increasing for all $0 \leq k \leq 2^{\ell}-1$.
Therefore, (\ref{eq:level2}) is proven. 

For $0\leq k \leq 2^{\ell}-1$ there exists an integer $s$, $0\leq s \leq {\ell}$ such that $2^{s-1}\leq k \leq 2^s-1$.
Therefore, we get $s =\lceil \Log2{(k+1)}\rceil$ and $\Level(a^{\ell}(k))=\lceil \Log2{(k+1)}\rceil+N-{\ell}$.
\end{proof}
The next Lemma describes the determination of $K=\Pos{(a^{\ell}(k))}$. As stated previously, $\Pos{(a^{\ell}(k))}$ depends on $L=\Level{(a^{\ell}(k))}$, which was determined in Lemma \ref{le:level}.
Recall that for $j\in \NN$ with dyadic expansion $j=\sum \epsilon_i\, 2^i$ we have $m(j)=\min{\{i\in \NN:\,\epsilon_i \neq 0\}}.$
\begin{lemma}
\label{le:pos}
Let $N \in \NN_0$ and $0 \leq {\ell} \leq N$. Then $\Pos(a^{\ell}(0))=2^{N-\ell}-1$ and for all $0< k \leq 2^{\ell}-1$
\begin{align*}
\Pos(a^{\ell}(k))&=(k+1)\,(2^{N-\ell+1}-1)+2^{L-N+{\ell}-1}-2^L-1+\sum_{j=2^{L-N+{\ell}-1}+1}^{k}{m(j)}.
\end{align*}
\end{lemma}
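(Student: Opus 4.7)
The plan is to derive the formula directly from the relation $\Pos(a^{\ell}(k)) = a^{\ell}(k) - 2^{\Level(a^{\ell}(k))}$ from equation (\ref{eq:mallatnew}), using the explicit formula for $a^\ell(k)$ from Lemma \ref{le:alk} together with the explicit formula for $L = \Level(a^\ell(k))$ from Lemma \ref{le:level}. So the proof is essentially an algebraic manipulation anchored on an auxiliary sum identity already established inside the proof of Lemma \ref{le:level}.

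First I would dispose of the base case $k=0$. From (\ref{eq:al0}) we have $a^{\ell}(0) = 2^{N-\ell+1}-1$, and Lemma \ref{le:level} gives $\Level(a^{\ell}(0)) = N-\ell$, so (\ref{eq:mallatnew}) yields $\Pos(a^{\ell}(0)) = 2^{N-\ell+1} - 1 - 2^{N-\ell} = 2^{N-\ell} - 1$ as claimed.

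For the main case $0 < k \leq 2^{\ell}-1$, set $s = L - N + \ell$; by Lemma \ref{le:level} this is exactly $s = \lceil \log_2(k+1) \rceil$, and for $k \geq 1$ one has $2^{s-1} \leq k$, which legitimises splitting the sum. Using (\ref{eq:mallatnew}) and Lemma \ref{le:alk} we write
\begin{equation*}
\Pos(a^{\ell}(k)) = (k+1)(2^{N-\ell+1}-1) + \sum_{j=1}^{k} m(j) - 2^L,
\end{equation*}
and then decompose
\begin{equation*}
\sum_{j=1}^{k} m(j) = \sum_{j=1}^{2^{s-1}} m(j) + \sum_{j=2^{s-1}+1}^{k} m(j).
\end{equation*}
The key ingredient is equation (\ref{eq:sumtnj}) from the proof of Lemma \ref{le:level}, namely $\sum_{j=1}^{2^{s-1}} m(j) = 2^{s-1}-1$. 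Substituting this and rewriting $2^{s-1} = 2^{L-N+\ell-1}$ gives exactly the stated formula.

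The only genuine step of substance is the identity $\sum_{j=1}^{2^{s-1}} m(j) = 2^{s-1}-1$, but this is reused verbatim from the previous lemma's proof and does not need to be redone. The other potential pitfall is checking that $2^{L-N+\ell-1}$ is indeed in the range $[1,k]$ so that the sum decomposition makes sense; this is a quick case check using $s = \lceil \log_2(k+1) \rceil \geq 1$ and the elementary inequality $2^{s-1} \leq k$ valid for $k\geq 1$. Everything else is bookkeeping.
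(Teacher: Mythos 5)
Your proof is correct, and it reaches the formula by a more direct route than the paper. The paper does not substitute the closed form of $a^{\ell}(k)$ into $\Pos(a^{\ell}(k))=a^{\ell}(k)-2^{L}$ in one step; instead it sets up the within-level recursion $\Pos(a^{\ell}(j))=\Pos(a^{\ell}(j-1))+1+t^{\ell}(j)$ for $2^{L-N+\ell-1}<j\leq 2^{L-N+\ell}-1$, anchors it at the leftmost interval of level $L$ by computing $\Pos(a^{\ell}(2^{L-N+\ell-1}))=2^{N-\ell+1}-2$ (which itself requires Lemma \ref{le:alk} together with the identity (\ref{eq:sumtnj})), and then sums the increments $t^{\ell}(j)=m(j)+2^{N-\ell+1}-2$. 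Your argument plugs Lemma \ref{le:alk} directly into (\ref{eq:mallatnew}) and peels off $\sum_{j=1}^{2^{s-1}}m(j)=2^{s-1}-1$ via (\ref{eq:sumtnj}); the two computations are algebraically equivalent (the paper's anchor value is precisely your $a^{\ell}(2^{s-1})-2^{L}$), but yours skips the separate anchor computation and the recursion, so it is shorter. What the paper's presentation buys is that it mirrors the derivation of $a^{\ell}(k)$ itself and makes visible how $\Pos$ advances along a fixed level; what yours buys is brevity. Your check that $1\leq 2^{s-1}\leq k$ for $k\geq 1$, which legitimises splitting the sum, is exactly the point that needs verifying and you handle it correctly.
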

\begin{proof}
Recall that $a^{\ell}(0)=2^{N-{\ell}+1}-1$ and $\Level(a^{\ell}(0))=N-\ell$. Therefore, by equation (\ref{eq:mallatnew}) we have $\Pos(a^{\ell}(0))=2^{N-\ell}-1$.

Fix $0<k\leq 2^{\ell}-1$ and let $L=\Level{(a^{\ell}(k))}$. Lemma \ref{le:level} yields $\Level{(a^{\ell}(j))}=L$, for all $2^{L-N+{\ell}-1}\leq j \leq 2^{L-N+{\ell}}-1$. 
Recall from the proof of Lemma \ref{le:alk} that $a^{\ell}(j)=a^{\ell}(j-1)+1+t^{\ell}(j)$,
where $t^{\ell}(j)=m(j)+2^{N-{\ell}+1}-2$. 
Hence, by equation (\ref{eq:mallatnew}) we have the following recursive formula
\begin{align*}
\Pos{(a^{\ell}(j))}&=\Pos{(a^{\ell}(j-1))}+1+t^{\ell}(j),\quad 2^{L-N+{\ell}-1}<j \leq 2^{L-N+{\ell}}-1
\end{align*}and therefore,  
\begin{equation}
\label{eq:posalk1}
\Pos{(a^{\ell}(k))}=\Pos{(a^{\ell}(2^{L-N+{\ell}-1}))}+k-2^{L-N+\ell-1}+\sum_{j=2^{L-N+\ell-1}+1}^kt^{\ell}(j).
\end{equation}
Since $t^{\ell}(j)=m(j)+2^{N-{\ell}+1}-2$, it follows that
\begin{equation}
\label{eq:posalk2}
\sum_{j=2^{L-N+\ell-1}+1}^kt^{\ell}(j)=k\,(2^{N-\ell+1}-2)+2^{L-N+\ell}-2^L+\sum_{j=2^{L-N+\ell-1}+1}^k m(j).
\end{equation}
Lemma \ref{le:alk} and equation (\ref{eq:sumtnj}) yield $a^{\ell}(2^{L-N+{\ell}-1})=2^L+2^{N-\ell+1}-2$ and by equation (\ref{eq:mallatnew}) we have
\begin{equation}
\label{eq:posalk3}
\Pos{(a^{\ell}(2^{L-N+{\ell}-1}))}=2^{N-{\ell}+1}-2.
\end{equation}
Putting equation (\ref{eq:posalk2}) and (\ref{eq:posalk3}) into equation (\ref{eq:posalk1}) yields the statement. 
\end{proof}

\subsection{Dyadic subtrees and their lowermost level in $\D_N$}
\label{sec:lowermostlevel}
\label{sec:subtrees}
In this section we examine the behaviour of the postorder rearrangement $\tau_N$ on complete dyadic subtrees in $\D_N$ given by 
\begin{equation}
 \label{eq:subdef}
 \T_{\ell,k}^N=\{I \in \D_N: I \subseteq I_{\ell,k}\}
 \end{equation}
and on their lowermost level in $\D_N$ given by
\begin{equation}
\label{eq:lexdef}
\E^N_{\ell,k}=\{I \in \D_N:\, I \subseteq I_{\ell,k},\, \abs{I}=2^{-N}\}.
\end{equation}
Note that $\E^N_{\ell,k}$ is a collection of disjoint dyadic intervals and hence, $\car{ \E^N_{\ell,k}}=1$. We know from (\ref{eq:subtree}) that $\car{ \T_{\ell,k}^N}=N-\ell+1$. 
We measure the behaviour of the rearrangement by the Carleson constants $\car{ \tau_N(\T_{\ell,k}^N)}$ and $\car{ \tau_N(\E^N_{\ell,k})}$. 
The following two theorems and the corresponding proofs reveal a remarkable phenomenon of the postorder rearrangement $\tau_N$. A complete dyadic subtree as well as its lowermost level in $\D_N$ is mapped under $\tau_N$ onto collections of dyadic intervals with large Carleson constant, if it contains the leftmost interval $I_{N,0}$, cf.~Theorem \ref{th:leftmost}. Otherwise, it is mapped under $\tau_N$ onto a collection of disjoint dyadic intervals of equal length, cf.~Theorem \ref{th:kg0}.

\begin{theorem}
\label{th:leftmost}
Let $N \in \NN_0$ and $0\leq \ell \leq N$. Then
\begin{equation*}
\car{ \tau_N(\T_{\ell,0}^N)}=N-\ell+1  \quad \text{and} \quad \car{ \tau_N(\E^N_{\ell,0})}\geq \frac{N-\ell+1}{2}.
\end{equation*}
\end{theorem}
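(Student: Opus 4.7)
The plan is to compute $\tau_N(\T_{\ell,0}^N)$ and $\tau_N(\E^N_{\ell,0})$ explicitly via Lemma~\ref{le:level}, and then read off the Carleson constant.

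First I would establish that $\tau_N(\T_{\ell,0}^N) = \D_{N-\ell}$. An interval $I_{\ell',k'}$ belongs to $\T_{\ell,0}^N$ precisely when $\ell \leq \ell' \leq N$ and $0 \leq k' \leq 2^{\ell'-\ell}-1$. In that range Lemma~\ref{le:level} gives
\[
\Level(a^{\ell'}(k')) \;=\; \lceil \Log2{(k'+1)}\rceil + N-\ell' \;\leq\; (\ell'-\ell) + (N-\ell') \;=\; N-\ell,
\]
so every image lies in $\D_{N-\ell}$. Since $\tau_N$ is a bijection and $\abs{\T_{\ell,0}^N} = 2^{N-\ell+1}-1 = \abs{\D_{N-\ell}}$, this containment is an equality. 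Because $\D_{N-\ell}$ coincides with the complete dyadic subtree $\T^{N-\ell}_{0,0}$ of depth $N-\ell$, (\ref{eq:subtree}) yields $\car{\tau_N(\T_{\ell,0}^N)} = N-\ell+1$ directly.

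For the lower bound on $\car{\tau_N(\E^N_{\ell,0})}$, I would work with the leaf collection $\E^N_{\ell,0} = \{I_{N,k} : 0 \leq k \leq 2^{N-\ell}-1\}$. Lemma~\ref{le:level} with $\ell'=N$ simplifies to $\Level(a^N(k)) = \lceil \Log2{(k+1)} \rceil$. Hence $k=0$ produces an image at level $0$ (indeed $a^N(0)=1$, so $\tau_N(I_{N,0})=I_{0,0}$), and for each $1 \leq s \leq N-\ell$ precisely the $2^{s-1}$ indices $k \in \{2^{s-1},\dots,2^{s}-1\}$ are mapped into level $s$. The images at level $s$ therefore contribute total length $2^{s-1}\cdot 2^{-s} = 1/2$, while the level-$0$ image contributes length $1$. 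Since $I_{0,0}$ lies in $\tau_N(\E^N_{\ell,0})$ and every element of this collection is a subset of $I_{0,0}$, evaluating the supremum in~(\ref{eq:carleson}) at $I = I_{0,0}$ gives
\[
\car{\tau_N(\E^N_{\ell,0})} \;\geq\; 1 + \frac{N-\ell}{2} \;\geq\; \frac{N-\ell+1}{2}.
\]

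The argument is essentially bookkeeping once Lemma~\ref{le:level} has supplied the levels of the images; the only step that requires any care is the bijection-and-cardinality argument forcing $\tau_N(\T_{\ell,0}^N) = \D_{N-\ell}$, and recognising that the root $I_{0,0}$ is the optimal test interval in the Carleson supremum because the entire image collection sits below it. No finer information about the positions $\Pos(a^{\ell'}(k'))$ from Lemma~\ref{le:pos} is needed for either bound.
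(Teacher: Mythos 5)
Your proof is correct and follows essentially the same route as the paper: the level count $\abs{\{k:\Level(a^N(k))=s\}}=2^{s-1}$ from Lemma~\ref{le:level} and the choice of $I_{0,0}$ as test interval give exactly the paper's lower bound $1+\frac{N-\ell}{2}$. The only (harmless) difference is in the first identity, where the paper observes directly that $\T_{\ell,0}^N$ carries the postorder ordinal numbers $1,\dots,2^{N-\ell+1}-1$, i.e.\ the lexicographic ordinal numbers of $\D_{N-\ell}$, whereas you derive the inclusion $\tau_N(\T_{\ell,0}^N)\subseteq\D_{N-\ell}$ from Lemma~\ref{le:level} and upgrade it to equality by a cardinality count.
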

\begin{proof}
Recall that $\tau_N$ maps the $n^{th}$ interval in postorder onto the $n^{th}$ interval in lexicographic order. The definition of the postorder yields that the dyadic intervals in $\T_{\ell,0}^N$ have the corresponding postorder ordinal numbers $1,\dots,2^{N-\ell+1}-1$, cf.~Section \ref{sec:postorder}. This are exactly the lexicographic ordinal numbers of the dyadic intervals in $\D_{N-\ell}.$ 
Hence, $\tau_N(\T^N_{\ell,0})=\D_{N-\ell}$ and $\car{ \tau_N(\T^N_{\ell,0})}=N-\ell+1$.

\smallskip
The lowermost level of $\T^N_{\ell,0}$ in $\D_N$ is given by 
\[\E^N_{\ell,0}=\{I_{N,r}:\,0\leq r \leq  2^{N-\ell}-1\}.\] 
By the description of the postorder rearrangement $\tau_N$ in Section~\ref{sec:postrear} we have  
\begin{equation*}
\tau_N(\E^N_{\ell,0})=\bigg\{I_{L,K}:\,L=\Level(a^N(r)),\,K=\Pos(a^N(r)),\,0\leq r \leq 2^{N-\ell}-1\bigg\}.
\end{equation*}
We know from Lemma \ref{le:level} and Lemma \ref{le:pos} that $\Level{(a^N(0))}=0$ and $\Pos{(a^N(0))}=0$. Therefore, $I_{0,0} \in \tau_N(\E^N_{\ell,0})$ and by definition  (\ref{eq:carleson}) 
\begin{equation}
\car{\tau_N(\E^N_{\ell,0})}\geq \frac{1}{\abs{I_{0,0}}} \sum_{\substack{J \subseteq I_{0,0},\\J \in \tau_N(\E^N_{\ell,0})}}{\abs{J}} = \sum_{J \in \tau_N(\E^N_{\ell,0})}{\abs{J}}.
\label{eq:tauNEN}
\end{equation}
Note that $\tau_N(\E^N_{\ell,0}) \subseteq \D_{N-\ell}$. We split the sum on the right hand side into levels and get 
\begin{equation}
\label{eq:splitlevel}
\begin{split}
\car{\tau_N(\E^N_{\ell,0})}\geq \sum_{m=0}^{N-\ell} 2^{-m}\abs{\B(m)},
\end{split}
\end{equation}where $\B(m)$ is the set of dyadic intervals in the collection $\tau_N(\E^N_{\ell,0})$ that have length $2^{-m}$.
We denote by $\A(m)$ the set of postorder ordinal numbers corresponding to $\B(m)$. Then $\abs{\B(m)}=\abs{\A(m)}$ and 
\begin{align*}
\A(m)&=\{a^N(r):\,0 \leq r \leq 2^{N-\ell}-1,\,\Level(a^N(r))=m\}.
\end{align*}
Obviously, $\abs{\A(0)}=1$. By Lemma \ref{le:level} we have $\abs{\A(m)}= 2^{m-1}$ for all $1\leq m\leq N-\ell$. Hence,
\begin{equation*}
\label{eq:splitlevelnew}
\car{\tau_N(\E^N_{\ell,0})}\geq 1+\sum_{m=1}^{N-\ell} 2^{-1}=\frac{N-\ell}{2}+1\geq \frac{N-\ell+1}{2}.
\end{equation*}
\end{proof}

\begin{remark}
Let $N \in \NN_0$. An easy computation shows that for $N-1 \leq \ell \leq N$ 
\[\car{ \tau_N(\E^N_{\ell,0})}=1+\frac{N-\ell}{2}.\]
Obviously, for $0\leq \ell \leq N-2$ we have the upper bound 
\[\car{ \tau_N(\E^N_{\ell,0})} \leq  N-\ell+1.\]
\end{remark}

\begin{conjecture}
Let $N \in \NN$, $N \geq 2$ and $0 \leq \ell \leq N-2$. 
The supremum in definition (\ref{eq:carleson}) for the Carleson constant $\car{\tau_N(\E^N_{\ell,0})}$ is attained for the interval $I_{1,0}$. This gives the following 
formula
\[\car{\tau_N(\E^N_{\ell,0})}=\frac{N-\ell}{2}+\frac{3}{2}-2^{-N+\ell+1}.\] 
\end{conjecture}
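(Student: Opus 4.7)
The plan is to verify the conjecture by computing the Carleson ratio at $I_{1,0}$ explicitly via the postorder formulae of Section~4.1, and then arguing that no other interval in $\tau_N(\E^N_{\ell,0})$ can do better.

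First, combining Lemma~\ref{le:alk} with Legendre's identity $\sum_{j=1}^{k}m(j)=k-s_2(k)$, where $s_2(k)$ is the number of $1$'s in the binary expansion of $k$, yields the closed form
\[a^N(k)=2k+1-s_2(k),\qquad 0\leq k\leq 2^{N-\ell}-1.\]
Writing $r=2^{L-1}+r'$ and using $s_2(2^{L-1}+r')=1+s_2(r')$ shows that the positions of the level-$L$ intervals in $\tau_N(\E^N_{\ell,0})$ are precisely $K=a^N(r')-1$ as $r'$ ranges over $\{0,\ldots,2^{L-1}-1\}$. Strict monotonicity of $a^N$ together with the identity $a^N(2^{j-1})=2^j$ then implies, for every $1\leq j\leq N-\ell$ and every $L>j$, the counting
\[\#\{J\in\tau_N(\E^N_{\ell,0}):\,J\subseteq I_{j,0},\,|J|=2^{-L}\}=2^{L-j-1}+1.\]

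Summing lengths over $L$ produces
\[\frac{1}{|I_{j,0}|}\sum_{\substack{J\subseteq I_{j,0}\\ J\in\tau_N(\E^N_{\ell,0})}}|J|\;=\;2+\frac{N-\ell-j}{2}-2^{-(N-\ell-j)},\]
which is strictly decreasing in $j$ (the one-step difference equals $\tfrac12+2^{-(N-\ell-j)}>0$), so its maximum along the leftmost branch is attained at $j=1$ and equals exactly $\tfrac{N-\ell}{2}+\tfrac{3}{2}-2^{-N+\ell+1}$. A direct comparison with the ratio $1+(N-\ell)/2$ at $I_{0,0}$ then shows that $I_{1,0}$ strictly dominates the root whenever $N-\ell\geq 2$, confirming both the value and the location of the maximum among all leftmost intervals $I_{j,0}$.

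The main obstacle is to rule out the remaining candidates, namely $I_{j,K}\in\tau_N(\E^N_{\ell,0})$ with $K>0$. A finer analysis of the counting formula shows that the level-$L$ density inside $I_{j,K}$ depends on the binary structure of $K$ in a subtle way; some $K>0$ actually share the $+1$ pattern of $I_{j,0}$ (for instance $I_{2,2}$ contains $2^{L-3}+1$ level-$L$ elements of the collection), so the naive uniform bound $2^{L-j-1}$ is false. One must therefore proceed by cases: if $I_{j,K}\subseteq \mathcal{T}^N_{1,0}$, one applies the counting argument of the previous paragraph to the relevant enclosing leftmost interval and exploits monotonicity to beat $I_{1,0}$; if $I_{j,K}\subseteq \mathcal{T}^N_{1,1}$, a parallel self-similar pattern controls the count but with strictly fewer top levels, and the comparison with $I_{1,0}$ follows from the extra $3/2$ enjoyed by the left branch. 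For deep $j$ one can fall back on the trivial a priori estimate $\car{\mathcal{T}_{j,K}^N}=N-j+1$ established in equation (\ref{eq:subtree}), which is already well below $(N-\ell+3)/2 - 2^{-N+\ell+1}$. Assembling these cases into one uniform inequality is the delicate combinatorial step of the argument.
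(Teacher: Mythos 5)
You should first be aware that this statement is presented in the paper as an open \emph{conjecture}: the author verifies it only for $N-1\le\ell\le N$ (in the preceding Remark) and offers no argument for $0\le\ell\le N-2$, so your attempt is not being measured against a proof in the paper but against the absence of one. The explicit half of your computation is correct and already goes beyond what the paper establishes. Lemma \ref{le:alk} at level $N$ gives $a^N(k)=k+1+\sum_{j=1}^k m(j)$, Legendre's identity turns this into $a^N(k)=2k+1-s_2(k)$ with $s_2$ the binary digit sum, and your identification of the level-$L$ positions occurring in $\tau_N(\E^N_{\ell,0})$ as $K=a^N(r')-1$, $0\le r'\le 2^{L-1}-1$, is right. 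Since $a^N$ is strictly increasing with $a^N(2^{L-j-1})=2^{L-j}$, the count of level-$L$ members inside $I_{j,0}$ is indeed $2^{L-j-1}+1$ for $1\le j<L\le N-\ell$, and together with the single member $I_{j,0}$ at level $j$ this yields the ratio $2+\frac{N-\ell-j}{2}-2^{-(N-\ell-j)}$, which decreases in $j$, equals the conjectured value at $j=1$, and is at least the ratio $1+\frac{N-\ell}{2}$ at the root (with equality when $\ell=N-2$, so ``strictly dominates'' is a slight overstatement). This proves the lower bound $\car{\tau_N(\E^N_{\ell,0})}\ge\frac{N-\ell}{2}+\frac{3}{2}-2^{-N+\ell+1}$, which sharpens Theorem \ref{th:leftmost}.

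The gap is the entire upper-bound half, and you name it yourself: the claim that no member of $\tau_N(\E^N_{\ell,0})$ with position $K>0$ produces a larger ratio is a plan, not a proof. None of the branches of your case distinction is carried out: the ``parallel self-similar pattern'' in the right subtree is asserted without justification; the trivial a priori bound (which, note, is $N-\ell-j+1$ coming from $\T^N_{j,K}\cap\D_{N-\ell}$, not $N-j+1$ from $\T^N_{j,K}$) only disposes of levels $j$ of size roughly $\frac{N-\ell}{2}$ and larger; and for the remaining shallow intervals with $K>0$ no estimate at all is given. Moreover, the one concrete example you offer, $I_{2,2}$, is not actually a competitor: the supremum in (\ref{eq:carleson}) runs only over intervals belonging to the collection, and the level-$2$ positions occurring in $\tau_N(\E^N_{\ell,0})$ are $a^N(0)-1=0$ and $a^N(1)-1=1$, so $I_{2,2}\notin\tau_N(\E^N_{\ell,0})$. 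Restricting attention to intervals that do lie in the collection simplifies the task, but you have not done that analysis either. As it stands, the proposal establishes only the lower bound; both the conjectured value and the assertion that the supremum is attained at $I_{1,0}$ remain unproved.
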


Now we consider those dyadic trees in $\D_N$ that are mapped under the postorder rearrangement $\tau_N$ onto collections of disjoint dyadic intervals.  
\begin{theorem}
\label{th:kg0}
Let $N \in \NN$, $0 < \ell \leq N$ and $0<k\leq 2^{\ell-1}$. Then 
\begin{equation}
\car{ \tau_N(\T_{\ell,k}^N)}= \car{\tau_N(\E^N_{\ell,k})}=1.
\end{equation}
\end{theorem}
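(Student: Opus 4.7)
The plan is to show that $\tau_N(\T^N_{\ell,k})$ is an antichain of dyadic intervals all sharing one common dyadic level. From this, $\car{\tau_N(\T^N_{\ell,k})}=1$ is immediate from the definition of the Carleson constant, and $\car{\tau_N(\E^N_{\ell,k})}=1$ follows for free, since $\E^N_{\ell,k}\subseteq \T^N_{\ell,k}$ implies that $\tau_N(\E^N_{\ell,k})$ is a subcollection of an antichain and hence an antichain itself.

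The first ingredient is a structural observation about postorder traversal: on a complete dyadic subtree, the postorder visits the nodes consecutively, with the root visited last and the leftmost leaf visited first. Applied to $\T^N_{\ell,k}$, whose root is $I_{\ell,k}$ and whose leftmost level-$N$ descendant is $I_{N,k\cdot 2^{N-\ell}}$, this says that the postorder ordinals attached to the $2^{N-\ell+1}-1$ nodes of $\T^N_{\ell,k}$ form the contiguous block of integers $[\,a^N(k\cdot 2^{N-\ell}),\,a^\ell(k)\,]$. Under $\tau_N$ this becomes precisely the range of lexicographic ordinals of the image.

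The core step is to apply Lemma~\ref{le:level} at both endpoints. At the root, $\Level(a^\ell(k))=\lceil\log_2(k+1)\rceil+N-\ell=:L$. At the leftmost leaf, the lemma with parameters $(N,k\cdot 2^{N-\ell})$ gives $\Level(a^N(k\cdot 2^{N-\ell}))=\lceil\log_2(k\cdot 2^{N-\ell}+1)\rceil$. Pulling out the factor $2^{N-\ell}$ inside the logarithm reduces the desired equality $\Level(a^N(k\cdot 2^{N-\ell}))=L$ to the elementary identity $\lceil\log_2(k+2^{\ell-N})\rceil=\lceil\log_2(k+1)\rceil$. Since $k+2^{\ell-N}\in(k,k+1]$ for $k\geq 1$ and $\ell\leq N$, a brief split on whether $k+1$ is a power of $2$ confirms the identity. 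This is the only point at which $k>0$ is used, and it is precisely the place where the argument must fail when $k=0$, in agreement with Theorem~\ref{th:leftmost}.

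With both endpoints of the contiguous postorder block at the common lexicographic level $L$, the monotonicity of $m\mapsto\lfloor\log_2 m\rfloor$ forces every ordinal in between to lie at level $L$ as well. Thus $\tau_N(\T^N_{\ell,k})$ consists of distinct dyadic intervals at a single dyadic level, which are automatically pairwise disjoint, and the Carleson constant equals $1$. The only non-routine ingredient is the ceiling-log identity above; the rest is bookkeeping against Lemma~\ref{le:level} and the elementary geometry of postorder.
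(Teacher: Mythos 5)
Your proof is correct and follows essentially the same route as the paper's: both rest on Lemma \ref{le:level} to show that $\tau_N(\T^N_{\ell,k})$ lies entirely in the single lexicographic level $\lceil\Log2(k+1)\rceil+N-\ell$, hence is a collection of pairwise disjoint intervals with Carleson constant $1$, with the claim for $\E^N_{\ell,k}$ following by inclusion. The only difference is in the bookkeeping: the paper applies Lemma \ref{le:level} directly to every node $I_{m,r}$ of the subtree (via $2^{\overline{s}-1}\le k\,2^{m-\ell}$ and $(k+1)2^{m-\ell}-1\le 2^{\overline{s}}-1$ for $\overline{s}=m-\ell+\lceil\Log2(k+1)\rceil$), whereas you evaluate it only at the root and the leftmost leaf and fill in the rest using the contiguity of the postorder ordinals of a complete subtree together with the monotonicity of $n\mapsto\lfloor\Log2 n\rfloor$ --- an equivalent amount of ceiling-log arithmetic, correctly pinpointing $k>0$ as the place where the argument must break down.
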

\begin{proof}
The idea of the proof is that for $k>0$ there exists $s \in \NN_0$ such that the collection  $\T_{\ell,k}^N$ is a subset of the collection of the dyadic intervals of length $s+N-\ell$.

The complete dyadic subtree $\T_{\ell,k}^N$ is given by
\[\T_{\ell,k}^N=\{I_{m,r}:\ell\leq m \leq N,\,k\,2^{m-\ell}\leq r \leq (k+1)\,2^{m-\ell}-1\}.\]
We associate the collection $\T_{\ell,k}^N$ with the set of postorder ordinal numbers 
\begin{equation}
\label{eq:amr}
\{a^{m}(r):\ell \leq m\leq N,\,  k\,2^{m-\ell}\leq r \leq (k+1)\,2^{m-\ell}-1\}.
\end{equation}
By the description of the postorder rearrangement $\tau_N$ in Section~\ref{sec:postrear}, we have  
\[\tau_N(\T_{\ell,k}^N)=\{I_{L,K}:\,L=\Level{(a^m(r))},\,K=\Pos{(a^m(r))}\}.\]
Let $s=\left\lceil\Log2{(k+1)}\right\rceil$. We show that for all $m$ and $r$ as in (\ref{eq:amr}) we have
\[\Level{(a^m(r))}=s+N-\ell.\]
Let $\overline{s}=m-\ell+\left\lceil\Log2{(k+1)}\right\rceil$ so that $2^{\overline{s}-1} \leq k\,2^{m-\ell}$ and $(k+1)2^{m-\ell}-1\leq 2^{\overline{s}}-1$.
By Lemma \ref{le:level} it follows that for all $k\,2^{m-\ell}\leq r \leq (k+1)\,2^{m-\ell}-1$ we have
\begin{equation*}
\Level(a^m(r))=\overline{s}+N-m=s+N-\ell.
\end{equation*}
The image of $\T_{\ell,k}^N$ is then given by
\[\tau_N(\T_{\ell,k}^N)=\{I_{s+N-\ell,K}:K=\Pos{(a^m(r))} \}. \]
$\tau_N(\T_{\ell,k}^N)$ is a collection of disjoint dyadic intervals and therefore, $\car{ \tau_N(\T_{\ell,k}^N)}=1$.
Since $\E^N_{\ell,k} \subseteq \T^N_{\ell,k}$, it follows that $\tau_N(\E^N_{\ell,k}) \subseteq \tau_N(\T_{\ell,k}^N)$. Therefore, $\tau_N(\E^N_{\ell,k})$ is also a collection of disjoint dyadic intervals with $\car{\tau_N(\E^N_{\ell,k}) }=1$.
\end{proof}

\subsection{The proof of Theorem \ref{th:operatornorm1}}
Finally, we have all ingredients that we need to prove the statement of Theorem \ref{th:operatornorm1}. 
For convenience we recall the statement. The rearrangement operators $T=T_{\tau_N}\big\vert_{\M(\T_{\ell,0}^N)}$ and $T=T_{\sigma_N}\big\vert_{\M(\D_{N-\ell})}$ satisfy the following norm estimates
\begin{equation*}
\frac{1}{\sqrt{2}}(N-\ell+1)^{\frac{1}{2}}\leq\normBMON{T}{}\leq (N-\ell+1)^{\frac{1}{2}}.
\end{equation*}
Recall that $\M(\T_{\ell,0}^N)=\lin{\{h_I: I \in \T_{\ell,0}^N\}}$ and $\M(\D_{N-\ell})=\lin{\{h_I: I \in \D_{N-\ell}\}}$.
The proof uses the norm estimates for rearrangement operators on $\BMON$ given in Section \ref{sec:rearop} and the estimates for Carleson constants given in Theorem \ref{th:leftmost} and Theorem \ref{th:kg0}.  

\begin{proof}[Proof of Theorem \ref{th:operatornorm1}]
 Let $x \in \M(\T_{\ell,0}^N)$. 
The norm estimate (\ref{eq:rear1}) and the statement of Theorem \ref{th:leftmost} yield
\[\normBMO{T_{\tau_N}x}{2}\leq \car{ \tau_N(\T_{\ell,0}^N)} \normBMO{x}{2}\leq (N-\ell+1)\normBMO{x}{2}.\]
This gives the upper bound
\[\normBMON{T_{\tau_N}\big\vert_{\M(\T_{\ell,0}^N)}}{}\leq (N-\ell+1)^{\frac{1}{2}}.\]
Equation (\ref{eq:rear}) gives the lower bound
\[\normBMON{T_{\tau_N}\big\vert_{\M(\T_{\ell,0}^N)}}{2}\geq \sup_{\substack{\C \subseteq \T_{\ell,0}^N,\\ \text{non-empty}}}\frac{\car{\tau_N(\C)}^{\frac{1}{2}}}{\car{\C}^{\frac{1}{2}}}.\]
We consider the lowermost level $\E^N_{\ell,0}$ of the complete dyadic subtree $\T^N_{\ell,0}$ in $\D_N$. 
 Obviously, $\E^N_{\ell,0}\subseteq \T^N_{\ell,0}$. We know that $\car{\E^N_{\ell,0}}=1$. Hence, by the statement of Theorem~\ref{th:leftmost} we have
\[\normBMON{T_{\tau_N}\big\vert_{\M(\T_{\ell,0}^N)}}{2}\geq \car{\tau_N(\E^N_{\ell,0})}\geq \frac{1}{2}(N-\ell+1).\]

Let $x\in \M(\D_{N-\ell})$. 
The norm estimate (\ref{eq:rear1}) yields
 \[\normBMO{T_{\sigma_N}x}{2}\leq \car{ \sigma_N(\D_{N-\ell})} \normBMO{x}{2}.\] 
By the proof of Theorem~\ref{th:leftmost} it follows that $\tau_N(\T_{\ell,0}^N)=\D_{N-\ell}$. Since $\sigma_N=\tau_N^{-1}$, we have $\sigma_N(\D_{N-\ell})= \T_{\ell,0}^N$ and $\car{ \sigma_N(\D_{N-\ell})}=\car{ \T_{\ell,0}^N} = N-\ell+1$.
Hence,
\[\normBMON{T_{\sigma_N}\big\vert_{\M(\D_{N-\ell})}}{}\leq (N-\ell+1)^{\frac{1}{2}}.\]
Equation (\ref{eq:rear}) gives the lower bound
\[\normBMON{T_{\sigma_N}\big\vert_{\M(\D_{N-\ell})}}{2}\geq \sup_{\substack{\C \subseteq \D_{N-\ell},\\ \text{non-empty}}}\frac{\car{\sigma_N(\C)}^{\frac{1}{2}}}{\car{\C}^{\frac{1}{2}}}.\]
Let $\ell<N$. The proof of Theorem~\ref{th:kg0} asserts that 
   $\tau_N(\T^N_{\ell+1,1}) \subseteq \D_{N-\ell}$ and $\car{ \tau_N(\T^N_{\ell+1,1}) }=1$.
Hence, we have the lower bound
\[\normBMON{T_{\tau_N}\big\vert_{\M(\D_{N-\ell})}}{2}\geq \car{ \T^N_{\ell+1,1}}=N-\ell\geq \frac{1}{2}(N-\ell+1).\]
The case $\ell=N$ is trivial. 
\end{proof}

\section{Proof of Theorem \ref{th:lexorder}}
As mentioned in Section \ref{sec:proof1}, the proof of Theorem \ref{th:lexorder} uses a geometric representation of order intervals with respect to the postorder, $\preceq$. This geometric representation is given in Proposition \ref{prop:maxint} and Definition \ref{def:cone} as follows.
For every postorder order interval
\[\B^N(I_1,I_2)=\{I \in \D_N: I_1 \preceq I \preceq I_2\},\]
there exists a collection of maximal intervals $\LL=\{L_1,\dots,L_m\}$ such that
\begin{equation*}
\B^N(I_1,I_2)=\C(I_1,L_1)\cup\R(I_1,L_1)\cup_{i=2}^{m}\M_i,
\end{equation*}where $\C(I_1,L_1)$ is the cone of dyadic intervals between $I_1$ and $L_1$, $\R(I_1,L_1)$ is the right fill-up of the cone and $\M_i$ is the complete dyadic subtree with root $L_i$ given by $\M_i=\{I \in \D_N:I \subseteq L_i\}$.

For the norm estimate in Theorem \ref{th:lexorder} we need an estimate for the Carleson constant $\car{ \B^N(I_1,I_2)}$. By the geometric representation of $B^N$ given above we have that 
the Carleson constant $\car{ \B^N(I_1,I_2)}$ is related to the Carleson constant of the cone and the right fill-up. Therefore, we start examining the Carleson constant $\car{ \C(I,J)\cup\R(I,J)}$ for two non-disjoint dyadic intervals $I,J \in \D_N$.
  \begin{theorem}
 \label{th:coneright}
 Let $N \in \NN_0$.
  Let $I,J \in \D_N$ and $I \subseteq J$. 
    If $\R(I,J) \neq \emptyset$, then
  \begin{equation}
  \label{eq:CR}
  N-\Log2{\frac{1}{\abs{I}}}+1\leq \car{ \C(I,J)\cup \R(I,J)} \leq N-\Log2{\frac{1}{\abs{J}}}+2.
  \end{equation}
 \end{theorem}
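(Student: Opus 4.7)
The plan is to derive both inequalities from the monotonicity of the Carleson constant under inclusion of families together with formula (\ref{eq:subtree}) for the Carleson constant of a complete dyadic subtree. Monotonicity---namely $\C'\subseteq\C''$ implies $\car{\C'}\leq\car{\C''}$---is immediate from the definition (\ref{eq:carleson}): enlarging the family only enlarges the sum $\sum_{J\subseteq K, J\in\C}\abs{J}$ at every admissible root $K$, while also enlarging the set of roots over which the supremum is taken. It therefore suffices to sandwich the collection $\C(I,J)\cup\R(I,J)$ between two complete dyadic subtrees of $\D_N$ whose Carleson constants are explicit.

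For the upper bound I would observe that every interval in $\C(I,J)\cup\R(I,J)$ is a subset of $J$: the cone intervals $C_1,\ldots,C_n=J$ satisfy $C_i\subseteq C_n=J$ by Definition \ref{def:cone}, while whenever $\U_{i+1}\neq\emptyset$ one has $\U_{i+1}\subseteq C_{i+1}\setminus C_i\subseteq C_{i+1}\subseteq J$. Writing $J=I_{\ell_J,k_J}$, this yields $\C(I,J)\cup\R(I,J)\subseteq\T^N_{\ell_J,k_J}$, and monotonicity combined with (\ref{eq:subtree}) gives
\[\car{\C(I,J)\cup\R(I,J)}\leq\car{\T^N_{\ell_J,k_J}}=N-\ell_J+1=N-\Log2\frac{1}{\abs{J}}+1,\]
which already implies the stated upper bound $N-\Log2\frac{1}{\abs{J}}+2$.

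For the lower bound, the assumption $\R(I,J)\neq\emptyset$ provides some index $i\in\{1,\ldots,n-1\}$ with $\U_{i+1}\neq\emptyset$; by Definition \ref{def:cone} this forces $C_i$ to be the left half of $C_{i+1}$, so $C_{i+1}\setminus C_i$ is a dyadic interval of length $\abs{C_i}=2^{i-1}\abs{I}$. Then $\U_{i+1}=\{U\in\D_N:U\subseteq C_{i+1}\setminus C_i\}$ is precisely the complete dyadic subtree of $\D_N$ rooted at $C_{i+1}\setminus C_i$, whose level equals $\Log2\frac{1}{\abs{I}}-i+1$. Formula (\ref{eq:subtree}) therefore yields $\car{\U_{i+1}}=N-\Log2\frac{1}{\abs{I}}+i$, and since $\U_{i+1}\subseteq\C(I,J)\cup\R(I,J)$ with $i\geq1$, monotonicity gives
\[\car{\C(I,J)\cup\R(I,J)}\geq\car{\U_{i+1}}=N-\Log2\frac{1}{\abs{I}}+i\geq N-\Log2\frac{1}{\abs{I}}+1.\]

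The whole argument rests on recognising $\C(I,J)\cup\R(I,J)$ as sandwiched between the two complete dyadic subtrees above, so there is no serious technical obstacle; the only step that deserves an explicit line is the monotonicity of the Carleson constant, which is a one-line consequence of (\ref{eq:carleson}). The lower bound is essentially sharp precisely when only the deepest possible right fill-up is populated, whereas the upper bound is actually attained with the tighter constant $+1$ in place of $+2$, so the stated bound leaves some slack.
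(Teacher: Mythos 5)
Your proof is correct. The lower bound is essentially the paper's own argument: both isolate a nonempty $\U_{i+1}$, recognise it as the complete dyadic subtree of $\D_N$ rooted at $C_{i+1}\setminus C_i$, an interval of length $2^{i-1}\abs{I}$, compute $\car{\U_{i+1}}=N-\Log2{\frac{1}{\abs{I}}}+i$ from (\ref{eq:subtree}), and use $i\geq 1$ (the paper phrases this through $\car{\R(I,J)}=\sup_i\car{\U_{i+1}}$ and $\car{\R}\leq\car{\C\cup\R}$, you through monotonicity of $\car{\cdot}$ under inclusion; these are the same estimate). For the upper bound you take a genuinely different route. The paper uses the subadditive splitting $\car{\C\cup\R}\leq\car{\C}+\car{\R}$, bounds the cone by $\car{\C(I,J)}\leq 2$ via the doubling $\abs{C_i}=2^{i-1}\abs{C_1}$, and bounds the fill-up by $\car{\R(I,J)}\leq N+n-1-\Log2{\frac{1}{\abs{I}}}=N-\Log2{\frac{1}{\abs{J}}}$; adding the two produces the stated constant $+2$. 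You instead note that every interval of $\C(I,J)\cup\R(I,J)$ is contained in $J$, so the whole collection sits inside the complete subtree $\T^N_{\ell_J,k_J}$, and monotonicity of the Carleson constant under inclusion of collections (an immediate consequence of (\ref{eq:carleson})) gives the bound $N-\Log2{\frac{1}{\abs{J}}}+1$ outright. Your argument is more elementary and yields a strictly sharper constant ($+1$ in place of $+2$), which, if one cared, would propagate to correspondingly sharper constants in Theorem \ref{th:carlesonorderint} and Theorem \ref{th:lexorder}; what the paper's decomposition buys is a separate quantification of the cone and of the right fill-up, which is the structural viewpoint reused in the proof of Theorem \ref{th:carlesonorderint}. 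Either way the inequality (\ref{eq:CR}) as stated is established.
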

 \begin{proof}
  The definition of the Carleson constant (\ref{eq:carleson}) yields
   \begin{equation}
    \label{eq:carlesonCR}
    \car{ \R(I,J)}\leq \car{ \C(I,J)\cup\R(I,J)} \leq  \car{ \R(I,J)}+ \car{ \C(I,J)}.
    \end{equation}
 
  Recall that the cone $\C(I,J)$ is a collection of dyadic intervals $\C=\{C_1,\dots,C_n\}$, where $n=\Log2{\frac{\abs{J}}{\abs{I}}}+1$, which satisfies the following properties: $C_1=I$, $C_n=J$, $\abs{C_i}=\frac{1}{2}\abs{C_{i+1}}$ and $C_i \subset C_{i+1}$ for $1\leq i \leq n-1$. This yields 
   \begin{equation*}
   \label{eq:cone}
  \car{ \C(I,J)}=\sup_{i=1,\dots,n}{\frac{1}{\abs{C_i}}\sum_{J\subseteq C_i,\, J \in \C}{\abs{J}}}=\sup_{i=1,\dots,n}{\frac{1}{\abs{C_i}}\sum_{s=1}^i{\abs{C_s}}}.
  \end{equation*}
  Since $\abs{C_i}=2^{i-1}\abs{C_1}$, it follows that $\car{ \C(I,J)}\leq 2$. 

\noindent
  The right fill-up $\R(I,J)$ of the cone is the collection of dyadic intervals $\bigcup_{i=1}^{n-1}\U_{i+1}$, where $\U_{i+1}=\emptyset$, if $C_i$ is the right half of $C_{i+1}$ and $\U_{i+1}=\{U \in \D_N: U \subseteq C_{i+1}\setminus C_{i}\}$,
    if $C_i$ is the left half of $C_{i+1}$. Note that by definition $\U_{i}\cap\U_j=\emptyset$ for every $i\neq j$. Therefore,
      \begin{equation}
         \label{eq:carright}
         \car{ \R(I,J)}=\sup_{i=1,\dots,n-1}\car{ \U_{i+1}}.
       \end{equation}
     If $\,\U_{i+1}\neq \emptyset$, then $\U_{i+1}$ is a dyadic subtree of $\D_N$ with root $C_{i+1}\setminus C_i$ and depth $N-\Log2{\frac{1}{\abs{C_{i+1}\setminus C_i}}}$. We know that $\abs{C_{i+1}\setminus C_i}=\abs{C_i}$ and $\abs{C_i}=2^{i-1}\abs{I}$. Therefore, by equation (\ref{eq:subtree}) we have $ \car{ \U_{i+1}}
          =N-\Log2{\frac{2^{1-i}}{\abs{I}}}+1=N+i-\Log2{\frac{1}{\abs{I}}}$.
      Hence, by equation (\ref{eq:carright})
      \[N+1-\Log2{\frac{1}{\abs{I}}} \leq \car{ \R(I,J)}\leq N+n-1-\Log2{\frac{1}{\abs{I}}}. \]
      Recall that $n=\Log2{\frac{\abs{J}}{\abs{I}}}+1$. This gives the upper bound
      \[\car{ \R(I,J)}\leq  N-\Log2\frac{1}{\abs{J}}.\]
   Summarizing we have (\ref{eq:CR}).
  \end{proof}
  
The statement of Proposition \ref{prop:maxint}  and the estimates from Theorem \ref{th:coneright} yield the following estimates for the Carleson constant $\car{ \mathcal{B}^N(I_1,I_2)}$.
  \begin{theorem}
  \label{th:carlesonorderint}
  Let $N \in \NN_0$ and $\B^N(I_1,I_2)=\{I \in \D_N: I_1 \preceq I \preceq I_2\}$, where $I_1,I_2 \in \D_N$ with $I_1 \preceq I_2$. 
   Let $L_1$ be the maximal interval in the order interval $\B^N(I_1,I_2)$ such that $I_1 \subseteq L_1$. Then
  \begin{equation}
  \label{eq:carlesonorderint}
N-\Log2{\frac{1}{\abs{I_1}}}+1\leq \car{ \mathcal{B}^N(I_1,I_2)}\leq N-\Log2{\frac{1}{\abs{L_1}}}+2.
  \end{equation}
   \end{theorem}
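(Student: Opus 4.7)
The plan is to combine the structural decomposition in Proposition \ref{prop:maxint} with the cone/right fill-up bounds of Theorem \ref{th:coneright}. Writing
\[\B^N(I_1,I_2)=\C(I_1,L_1)\cup\R(I_1,L_1)\cup\bigcup_{i=2}^{m}\M_i,\]
with $L_1,\dots,L_m$ pairwise disjoint, the monotonicity properties of the $L_i$ listed in Proposition \ref{prop:maxint} immediately give $\abs{L_i}\leq \abs{L_1}$ for all $i\geq 2$.

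For the lower bound, since $\C(I_1,L_1)\cup\R(I_1,L_1)\subseteq \B^N(I_1,I_2)$ and the Carleson constant is monotone with respect to set inclusion, the lower bound of Theorem \ref{th:coneright} at once yields
\[\car{\B^N(I_1,I_2)}\geq \car{\C(I_1,L_1)\cup\R(I_1,L_1)}\geq N-\Log2\frac{1}{\abs{I_1}}+1.\]

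For the upper bound, I would fix an arbitrary $I\in \B^N(I_1,I_2)$ and estimate $\abs{I}^{-1}\sum_{J\in \B^N(I_1,I_2),\,J\subseteq I}\abs{J}$ by case analysis. Since the $L_i$ are maximal and pairwise disjoint, $I$ is contained in exactly one $L_{i_0}$. If $i_0\geq 2$, then every $J\in \B^N(I_1,I_2)$ with $J\subseteq I$ lies in $\M_{i_0}$, because $\C(I_1,L_1)$, $\R(I_1,L_1)$ and the remaining $\M_i$'s are all disjoint from $L_{i_0}\supseteq I$; hence the ratio is dominated by $\car{\M_{i_0}}=N-\Log2\frac{1}{\abs{L_{i_0}}}+1\leq N-\Log2\frac{1}{\abs{L_1}}+1$. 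If $i_0=1$, the same disjointness forces every such $J$ to lie in $\C(I_1,L_1)\cup\R(I_1,L_1)$, and the upper half of Theorem \ref{th:coneright} gives $N-\Log2\frac{1}{\abs{L_1}}+2$; in the degenerate subcase $\R(I_1,L_1)=\emptyset$ this is replaced by the trivial bound $\car{\C(I_1,L_1)}\leq 2$, which is still dominated by $N-\Log2\frac{1}{\abs{L_1}}+2$ because $\abs{L_1}\geq 2^{-N}$. Taking the supremum over $I\in \B^N(I_1,I_2)$ finishes the upper bound.

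The main difficulty I anticipate is the disjointness bookkeeping in the upper bound: one must carefully verify that, whenever $I\subseteq L_1$, no $J\in \B^N(I_1,I_2)$ with $J\subseteq I$ slips into some $\M_i$ with $i\geq 2$, and conversely, whenever $I\subseteq L_{i_0}$ for $i_0\geq 2$, no $J\subseteq I$ comes from the cone-plus-right-fill-up. Both facts hinge on the pairwise disjointness of the maximal intervals $L_1,\dots,L_m$ supplied by Proposition \ref{prop:maxint}. Once this bookkeeping is settled, the proof reduces to a direct substitution of Theorem \ref{th:coneright} into the decomposition.
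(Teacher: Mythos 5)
Your proof follows the paper's argument essentially verbatim: both decompose $\B^N(I_1,I_2)$ via Proposition \ref{prop:maxint}, reduce the Carleson constant to a maximum over the pairwise disjoint pieces (your per-interval case analysis is just an unpacking of that identity), and then substitute Theorem \ref{th:coneright} together with the subtree formula $\car{\M_i}=N-\Log2{\frac{1}{\abs{L_i}}}+1$. The only differences are cosmetic---you dominate all $\car{\M_i}$ via $\abs{L_i}\leq\abs{L_1}$ instead of singling out $\M_2$ as the extremal subtree, and you explicitly treat the subcase $\R(I_1,L_1)=\emptyset$ in the upper bound, a hypothesis of Theorem \ref{th:coneright} that the paper (and your lower-bound step) leaves implicit.
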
 
  \begin{proof}
  Let $\LL=\{L_1,\dots, L_m\}$ be the maximal (with respect to inclusion) elements of $\B^N(I_1,I_2)$, as given in Proposition \ref{prop:maxint} . Since
  \begin{equation*}
  \B^N(I_1,I_2)=\C(I_1,L_1)\cup\R(I_1,L_1)\cup_{i=2}^{m}\M_i,
  \end{equation*}where $\M_i=\{I \in \D_N:I \subseteq L_i\}$, and 
  since $\M_i\cap \M_j=\emptyset$ for all $i\neq j$ and $\M_i\cap(\C(I_1,L_1)\cup\R(I_1,L_1))=\emptyset$ for all $i$, we have
  \begin{equation}
  \car{ \mathcal{B}^N(I_1,I_2)}=\max \{\car{ \C(I_1,L_1)\cup\R(I_1,L_1)},\max_{i=2,\dots,m}\car{\M_i}\}.
  \end{equation}
  
  If $I_1 \subseteq I_2$, then there is only one maximal interval $L_1=I_2$. Hence, $m=1$ and $I \in \B^N(I_1,I_2)$ if and only if $I \in \C(I_1,I_2)\cup\R(I_1,I_2)$, cf. proof of Proposition \ref{prop:maxint} in \cite{MR1389531}. Therefore, we have
  \begin{equation}
  \label{eq:subset}
  \car{ \mathcal{B}^N(I_1,I_2)}=\car{ \C(I_1,I_2)\cup\R(I_1,I_2)}.
  \end{equation}
  Theorem \ref{th:coneright} yields the statement. 
  
  If $I_1\cap I_2 =\emptyset$, then there exist maximal intervals $\LL=\{L_1,\dots,L_m\}$, $m\geq 2$. For $2\leq i \leq m$, $\M_i$ is a dyadic subtree of $\D_N$ with root $L_i$ and depth $N-\Log2{\frac{1}{\abs{L_i}}}$. Equation (\ref{eq:subtree}) yields $\car{ \M_i}=N-\Log2{\frac{1}{\abs{L_i}}}+1$.
  Proposition \ref{prop:maxint} yields $\abs{L_m}\leq \abs{L_{m-1}}<\cdots<\abs{L_2}$. Hence, $\car{\M_2}=\max_{i=2,\dots,m}\car{\M_i}$
and
  \begin{equation*}
  \car{ \mathcal{B}^N(I_1,I_2)}=\max \{\car{ \C(I_1,L_1)\cup\R(I_1,L_1)},\car{\M_2}\}.
  \end{equation*}Note that $\car{\M_2}=N-\Log2{\frac{1}{\abs{L_2}}}+1$. 
   By Theorem \ref{th:coneright} we have the following lower and upper bound.
    \begin{equation}
    \label{eq:lhsorderintervall}
      \car{ \mathcal{B}^N(I_1,I_2)}\geq \max \{N-\Log2{\frac{1}{\abs{I_1}}}+1,\, N-\Log2{\frac{1}{\abs{L_2}}}+1\}
    \end{equation} and
     \begin{equation}
      \label{eq:rhsorderintervall}
        \car{ \mathcal{B}^N(I_1,I_2)}\leq \max \{N-\Log2{\frac{1}{\abs{L_1}}}+2,\, N-\Log2{\frac{1}{\abs{L_2}}}+1\}.
      \end{equation}
      Inequality (\ref{eq:lhsorderintervall}) gives the left-hand side of (\ref{eq:carlesonorderint}).
 Proposition \ref{prop:maxint} states that $\abs{L_2} \leq \abs{L_1}$. Therefore, (\ref{eq:rhsorderintervall}) yields the right-hand side of (\ref{eq:carlesonorderint}).
\end{proof}

\subsection{The proof of Theorem \ref{th:lexorder}}
Now we have all ingredients for the proof of Theorem \ref{th:lexorder}. For convenience we give the statement of the Theorem. We have the following operator norm estimate for the rearrangement operator $T_{\sigma_N}$ acting on lexicographic order intervals $\E(E_1,E_2)$ given by the endpoints $E_1, E_2 \in \D_N$ with $E_1 \leq_l E_2$:
\begin{equation*}
\normBMON{T_{\sigma_N}\big\vert_{\M(\E)}}{2}\leq N-\Log2{\frac{1}{\abs{L_1}}}+2,
\end{equation*}where $L_1$ is the maximal (with respect to inclusion) dyadic interval in the postorder order interval $B^N(\sigma_N(E_1),\sigma_N(E_2))$ that contains the left endpoint $\sigma_N(E_1)$. 
Recall that $\M(\E)=\lin{\{I \in \D_N: I \in \E\}}.$

\begin{proof}[Proof of Theorem \ref{th:lexorder}]
Let $x\in \M(\E).$ The estimates of rearrangement operators on $\BMON$ in Section \ref{sec:rearop} give the upper bound
\begin{equation}
\label{eq:help1}
\normBMO{T_{\sigma_N}x}{2}\leq \car{\sigma_N(\E)}\, \normBMO{x}{2}.
\end{equation}
$\sigma_N$ is the bijective map on $\D_N$ that maps lexicographic order intervals onto postorder order intervals. Hence, for every lexicographic order interval $\E=\E(E_1,E_2)$ there exists a unique postorder order interval $\B=\B^N(\sigma_N(E_1),\sigma_N(E_2))$ so that $\sigma_N(\E)=\B$. Hence, by equation (\ref{eq:help1}) and Theorem \ref{th:carlesonorderint} we have
\begin{equation}
\normBMO{T_{\sigma_N}x}{2}\leq \car{\B}\, \normBMO{x}{2} \leq ( N-\Log2{\frac{1}{\abs{L_1}}}+2)\, \normBMO{x}{2},
\end{equation}
where $L_1$ is the maximal interval in $\B^N(\sigma_N(E_1),\sigma_N(E_2))$ with $\sigma_N(E_1) \subseteq L_1$. 
\end{proof}

\section*{Outlook}
 In \cite{MR1466662} P.F.X.~M\"uller gives a flexible geometric condition on the rearrangement $\tau$ which describes the isomorphic properties of the rearrangement operator $T_{\tau}$, see also \cite{MR2559130}.
 It is an open problem to uncover an equally flexible geometric condition on $\tau$ which is connected to isometric properties of $T_{\tau}$. Our present work could be interpreted as a first step towards the solution of this problem. 
\medskip

\subsection*{Acknowledgements}
This paper is part of the author's PhD thesis
written at the Department of Analysis, J.~Kepler University Linz. This research
has been supported by the Austrian Science foundation (FWF) Pr.Nr.~P22549. 

I thank my PhD advisor P.F.X.~M\"uller for many helpful discussions and suggestions during the preparation of this paper. 

I am grateful to G.~Schechtman for information on the postorder and to M.~Passenbrunner for suggestions on notation and description concerning the postorder. 

\bibliographystyle{plain}
\bibliography{postorder}

\begin{thebibliography}{10}

\bibitem{berman}
K.~Berman and J.~Paul.
\newblock {\em Algorithms : sequential, parallel, and distributed}.
\newblock Thomson/Course Technology, Boston, Mass, 2005.

\bibitem{MR871851}
B.~Cuartero and M.A. Triana.
\newblock {$(p,q)$}-convexity in quasi-{B}anach lattices and applications.
\newblock {\em Studia Math.}, 84(2):113--124, 1986.

\bibitem{MR0447953}
C.~Fefferman and E.M. Stein.
\newblock {$H^{p}$} spaces of several variables.
\newblock {\em Acta Math.}, 129(3-4):137--193, 1972.

\bibitem{MR0448538}
A.M. Garsia.
\newblock {\em Martingale inequalities: {S}eminar notes on recent progress}.
\newblock W. A. Benjamin, Inc., Reading, Mass.-London-Amsterdam, 1973.
\newblock Mathematics Lecture Notes Series.

\bibitem{MR2559130}
S.~Geiss and P.F.X. M{\"u}ller.
\newblock Extrapolation of vector-valued rerrangement operators.
\newblock {\em J. Lond. Math. Soc. (2)}, 80(3):798--814, 2009.

\bibitem{MR2183484}
S.~Geiss, P.F.X. M{\"u}ller, and V.~Pillwein.
\newblock A remark on extrapolation of rearrangement operators on dyadic
  {$H^s$}, {$0<s\leq 1$}.
\newblock {\em Studia Math.}, 171(2):197--205, 2005.

\bibitem{MR3054318}
A.~Kamont and P.F.X. M{\"u}ller.
\newblock Rearrangements with supporting trees, isomorphisms and shift
  operators.
\newblock {\em Math. Z.}, 274(1-2):57--83, 2013.

\bibitem{MR2245382}
D.E. Knuth.
\newblock {\em The art of computer programming. {V}ol. 1. {F}asc. 1}.
\newblock Addison-Wesley, Upper Saddle River, NJ, 2005.
\newblock MMIX, a RISC computer for the new millennium.

\bibitem{MR540367}
J.~Lindenstrauss and L.~Tzafriri.
\newblock {\em Classical {B}anach spaces. {II}}, volume~97 of {\em Ergebnisse
  der Mathematik und ihrer Grenzgebiete [Results in Mathematics and Related
  Areas]}.
\newblock Springer-Verlag, Berlin, 1979.
\newblock Function spaces.

\bibitem{Mallat1}
S.G. Mallat.
\newblock A theory for multiresolution signal decomposition: the wavelet
  representation.
\newblock {\em IEEE Trans. Pattern Analysis and Machine Intelligence},
  2(7):674--693, 1989.

\bibitem{MR1219953}
Y.~Meyer.
\newblock {\em Wavelets}.
\newblock Society for Industrial and Applied Mathematics (SIAM), Philadelphia,
  PA, 1993.
\newblock Algorithms \& applications, Translated from the French and with a
  foreword by Robert D. Ryan.

\bibitem{MR955660}
P.F.X. M{\"u}ller.
\newblock On projections in {$H^1$} and {BMO}.
\newblock {\em Studia Math.}, 89(2):145--158, 1988.

\bibitem{MR1466662}
P.F.X. M{\"u}ller.
\newblock Rearrangements of the {H}aar system that preserve {BMO}.
\newblock {\em Proc. London Math. Soc. (3)}, 75(3):600--618, 1997.

\bibitem{MR2157745}
P.F.X. M{\"u}ller.
\newblock {\em Isomorphisms between {$H^1$} spaces}, volume~66 of {\em Instytut
  Matematyczny Polskiej Akademii Nauk. Monografie Matematyczne (New Series)
  [Mathematics Institute of the Polish Academy of Sciences. Mathematical
  Monographs (New Series)]}.
\newblock Birkh\"auser Verlag, Basel, 2005.

\bibitem{MR2927805}
P.F.X. M{\"u}ller.
\newblock Extrapolation of vector-valued rearrangement operators {II}.
\newblock {\em J. Lond. Math. Soc. (2)}, 85(3):722--736, 2012.

\bibitem{MulPent}
P.F.X. M{\"u}ller and J.~Penteker.
\newblock p-summing multiplication operators, dyadic {H}ardy spaces and atomic
  decomposition.
\newblock {\em Houston Journal Math.}, 41(2):639--668, 2015.

\bibitem{MR1389531}
P.F.X. M{\"u}ller and G.~Schechtman.
\newblock A remarkable rearrangement of the {H}aar system in {$L_p$}.
\newblock {\em Proc. Amer. Math. Soc.}, 125(8):2363--2371, 1997.

\bibitem{MR1576148}
R.E.A.C. Paley.
\newblock A {R}emarkable {S}eries of {O}rthogonal {F}unctions ({I}).
\newblock {\em Proc. London Math. Soc.}, S2-34(1):241, 1932.

\bibitem{MR1063121}
F.~Schipp.
\newblock On equivalence of rearrangements of the {H}aar system in dyadic
  {H}ardy and {BMO} spaces.
\newblock {\em Anal. Math.}, 16(2):135--141, 1990.

\bibitem{MR510261}
E.M. Sem{\"e}nov.
\newblock Equivalence in {$L^p$} of permutations of the {H}aar system.
\newblock {\em Dokl. Akad. Nauk SSSR}, 242(6):1258--1260, 1978.

\bibitem{MR648492}
E.M. Sem{\"e}nov and B.~St{\"o}kert.
\newblock Permutations of the {H}aar system in spaces {$L_{p}$}.
\newblock {\em Anal. Math.}, 7(4):277--295, 1981.

\bibitem{MR2400818}
J.S. Walker.
\newblock {\em A primer on wavelets and their scientific applications}.
\newblock Studies in Advanced Mathematics. Chapman \& Hall/CRC, Boca Raton, FL,
  second edition, 2008.

\end{thebibliography}

\end{document}